\documentclass[10pt]{article}
\font\smallit=cmti10
\font\smalltt=cmtt10

\usepackage{amssymb,latexsym,amsmath,epsfig,amsthm,comment,tasks} 
\usepackage[table]{xcolor}
\usepackage{hyperref}

\usepackage{seqsplit}
\usepackage{listings}

\settasks{
  label=\roman*),
  label-width=2em,
  label-align=right,
  label-offset=0.5em
}

\makeatletter

\renewcommand\section{\@startsection {section}{1}{\z@}
{-30pt \@plus -1ex \@minus -.2ex}
{2.3ex \@plus.2ex}
{\normalfont\normalsize\bfseries\boldmath}}

\renewcommand\subsection{\@startsection{subsection}{2}{\z@}
{-3.25ex\@plus -1ex \@minus -.2ex}
{1.5ex \@plus .2ex}
{\normalfont\normalsize\bfseries\boldmath}}

\newcommand\blfootnote[1]{
\begingroup
\renewcommand\thefootnote{}\footnote{#1}
\addtocounter{footnote}{-1}
\endgroup
}

\newcommand{\bb}[1]{\mathbb{#1}}

\makeatother

\newtheorem{theorem}{Theorem}
\newtheorem{lemma}[theorem]{Lemma}
\newtheorem{proposition}[theorem]{Proposition}
\newtheorem{corollary}[theorem]{Corollary}

\theoremstyle{definition}

\def\cH{{\mathcal H}}

\def\F{{\mathbb F}}

\def\Q{{\mathbb Q}}

\def\Z{{\mathbb Z}}

\def\\{\cr}
\def\({\left(}
\def\){\right)}
\def\[{\left[}
\def\]{\right]}
\def\<{\langle}
\def\>{\rangle}

\newcommand{\ord}{\operatorname{ord}}
\newcommand{\Char}{\operatorname{char}}

\def\Hb1{\overline{\cH}_{m}}
\def\Ht1{\widetilde{\cH}_{m}}

\newcolumntype{L}{>{\raggedright\arraybackslash}X}

\begin{document}
\begin{figure}
\vspace*{-62pt}
\vspace{.75in}
\end{figure}
\vspace*{-45pt}
\leftline{\smalltt } \vskip -12.5pt
\centerline{\smalltt }
\vskip 3pt \hrule																													
\begin{center}
\uppercase{\bf \boldmath Primitive normal progressions in finite fields}
\vskip 20pt
{\bf Gustav Kj\ae rbye Bagger}\\
{\smallit School of Science, UNSW Canberra, Australia}\\ 
{\tt g.bagger@unsw.edu.au}\\ 
\vskip 20pt
{\bf Matthew Fernando}\\
{\smallit School of Science, UNSW Canberra, Australia}\\ 
{\tt p.maseuge\_fernando@unsw.edu.au}\\ 
\end{center}
\blfootnote{\emph{Date:} \today}
\blfootnote{\emph{Corresponding author:} Matthew Fernando (p.maseuge\_fernando@unsw.edu.au)}
\blfootnote{\emph{Key phrases:} Primitive elements, normal elements, prime sieves}
\blfootnote{\emph{2020 Mathematics Subject Classification:} 11A07, 11T99, 11B25}
\vskip 15pt
\centerline{\bf Abstract}
\noindent
Consider an arithmetic progression inside the finite field $\F_{q^n}$. We study the existence of progressions whose terms are all primitive and at least one of them is normal. Using a combination of sieve estimates, ramified bounds, and computational arguments, we improve significantly upon previous literature for progressions of length $m=4$ and obtain partial results whenever $5\leq m\leq 10$. Finally, we provide a general bound which outperforms previous results in the range $m\geq 53$.

\pagestyle{myheadings}
\markright{\smalltt Arithmetic progressions in finite fields \hfill}
\thispagestyle{empty}
\baselineskip=12.875pt
\vskip 30pt

\section{Introduction}
Let $\F_{q^n}$ be the finite field with $q^n$ elements for some prime power $q$. An element $\gamma \in \F_{q^n}$ is called \emph{primitive} if it generates $\F_{q^n}^*$, a cyclic group of order $q^n-1$. We say that $\Gamma \in \F_{q^n}$ is \emph{normal} if $\{ \Gamma, \Gamma^q, \ldots, \Gamma^{q^{n-1} }\}$ is an $\F_q$ basis for $\F_{q^n}$ as a vector space. Consider an arithmetic progression of length $m \geq 3$ inside $\F_{q^n}$, given by 
$$P(\alpha,\beta): \quad \alpha,\alpha+\beta,\alpha + 2\beta,\dots,\alpha + (m-1)\beta, $$
for some $\alpha \in \F_{q^n}$ and some non-zero $\beta \in \F_{q^n}^*$. If every element in $P(\alpha,\beta)$ is primitive and at least one element is normal, call the progression \emph{m-primitive}.
We define the set $N_m$ consisting of pairs $(q,n)$ satisfying the following:
$$\text{For any } \beta \in \F_{q^n}^*, \text{ there exists } \alpha \in \F_{q^n} \text{ such that } P(\alpha,\beta) \text{ is $m$-primitive}. $$
Note that, for any $m\geq 4$, we may discard either the first or last element in an $m$-primitive progression $P(\alpha,\beta)$ to obtain an $(m-1)$-primitive progression. Moreover, for any $m > \Char \F_q$, the sequences $P(\alpha,\beta)$ contain duplicate entries. Thus, such duplicates may be removed and $N_m = N_{\Char (\F_q)}$ for any $m\geq \Char \F_q$. We have a tower of inclusions
$$ N_3 \supseteq \dots \supseteq N_{\Char(\F_q)-1} \supseteq N_{\Char(\F_q)} = \dots $$
Throughout this paper, we assume that $m\leq \Char \F_q.$
Lemos, Neumann and Ribas completely determined the set $N_m$ for $m = 3$, see~\cite[Proposition 4.3]{aguirre-lemos-neumann-ribas-2022} for more details. In the case where $m=4$, they proved the following result.
\begin{proposition}[{\cite[Theorem 1.1 (iii)]{lemos-neumann-ribas-2023}}] \label{P: 4.6 in Lemos}
Let $q$ be a prime power with $\Char\F_{q^n} \geq 5$, and $n \geq 2$ be an integer. Then
$$q^n \geq 3.31 \times10^{2821} \quad \text{implies} \quad (q,n) \in N_4. $$
\end{proposition}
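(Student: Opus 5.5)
We follow the standard character-sum strategy of Lemos, Neumann and Ribas, using the multiplicative and additive characteristic-function machinery.

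\textbf{Counting function.} Fix $\beta\in\F_{q^n}^*$. Let $N(\beta)$ be the number of $\alpha$ such that $\alpha,\alpha+\beta,\alpha+2\beta,\alpha+3\beta$ are all primitive and $\alpha$ is normal. It is enough to show $N(\beta)>0$.

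We write $N(\beta)$ using Vinogradov's formula for the characteristic function of $e$-free elements (with $e=q^n-1$). We also use the analogous additive-character formula for $g$-free elements (with $g=x^n-1$ over $\F_q$), where the additive characters are indexed by divisors of $x^n-1$. This gives
$$N(\beta)=\theta(e)^4\Theta(g)\sum_{d_1,\dots,d_4\mid e}\ \sum_{h\mid x^n-1}\frac{\mu(d_1)\cdots\mu(d_4)\mu'(h)}{\phi(d_1)\cdots\phi(d_4)\Phi(h)}\sum_{\chi_{d_i},\psi_h}\ \sum_{\alpha}\chi_{d_1}(\alpha)\chi_{d_2}(\alpha+\beta)\chi_{d_3}(\alpha+2\beta)\chi_{d_4}(\alpha+3\beta)\psi_h(\alpha).$$

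\textbf{Main term and error.} The term with all characters trivial contributes $q^n$. Since $\Char\F_q\ge5$, the points $0,-\beta,-2\beta,-3\beta$ are distinct. So whenever some $\chi_{d_i}$ is nontrivial, the function $\prod_i\chi_{d_i}(x+i\beta)$ is not of the form $\chi(f^{e'})$. The Weil bound for mixed sums then gives $|\,\cdot\,|\le 4q^{n/2}$, or $3q^{n/2}$ when $\psi_h$ is trivial. If all $\chi_{d_i}$ are trivial and $\psi_h$ is nontrivial, the inner sum vanishes, up to a bounded correction from excluded points. Collecting terms yields the sufficient condition
$$q^{n/2}>4\,W(q^n-1)^4\,W(x^n-1),$$
where $W$ counts squarefree divisors.

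\textbf{Sieve refinement.} This crude condition is too weak by itself, so we refine it with a Cohen--Huczynska style prime sieve. Choose a core $e_0\mid e$ containing the small primes, and sieving primes $p_1,\dots,p_s$, with $\delta=1-4\sum_i 1/p_i$ (the factor $4$ because there are four multiplicative conditions). Similarly choose a core $g_0\mid x^n-1$ and sieving irreducible factors. This gives
$$q^{n/2}>4\,W(e_0)^4W(g_0)\left(\frac{s+t-1}{\delta}+2\right).$$

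\textbf{Turning this into $q^n\ge3.31\times10^{2821}$.} Bound $W(q^n-1)\le C_t(q^n)^{1/t}$ using the usual explicit estimate over primes. Bound $W(x^n-1)\le 2^n$; when $n$ is large relative to $q$, use the finer count of irreducible factors of $x^n-1$ of degree $<\log n$. Optimizing $t$ (around $t\approx 9$--$10$) then yields the stated threshold.

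The main obstacle is controlling $W(x^n-1)$ uniformly in $n\ge2$. When $q$ is small and $n$ is large, $2^n$ may not be negligible against $q^{n/2}$. I would split into two cases. If $q\ge 16$, then $2^n\le q^{n/4}$, and this absorbs the factor. If $q<16$, then $q=5,7,11,13$ (since $\Char\F_q\ge5$); here I would use the bound that the number of irreducible factors of $x^n-1$ is at most $n/\mathrm{ord}\cdots$ plus lower-order terms, giving $W(x^n-1)\le q^{n/4}$ for large $n$. The final numerical constant $3.31\times10^{2821}$ comes from the most restrictive of these cases.
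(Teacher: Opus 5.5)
\textbf{Verdict: genuine gap.} Your derivation of the unsieved criterion $q^{n/2}>4\,W(q^n-1)^4W(x^n-1)$ is the route the paper attributes to Lemos--Neumann--Ribas: Proposition~\ref{Sieving result} with $e=q^n-1$, $g=x^n-1$ and no sieving, combined with Lemmas~\ref{L: 4.3 in Lemos} and~\ref{L: 4.1 in Lemos}, which is the entry $(k,r)=(11,852)$ in Table~\ref{Results for general m}. The sieve refinement is not needed here. As written it is also misstated: $\delta$ omits $\sum_j q^{-\deg h_j}$, and the sieving primes should carry weight $4$ in the numerator.

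\textbf{Where the argument fails.} The statement is a specific numerical threshold, you never compute it, and the parameters you sketch cannot reach it.
\begin{itemize}
\item Absorbing $W(x^n-1)$ via $2^n\le q^{n/4}$ for $q\ge16$, or via $W(x^n-1)\le q^{n/4}$ for $q<16$, leaves $q^{n/4}\ge 4W(q^n-1)^4$. That needs $W(q^n-1)\le(q^n)^{1/t}$ with $t>16$, which contradicts your $t\approx 9$--$10$. For $t>16$ the resulting thresholds are vastly larger than $10^{2821}$.
\item Exponent $1/10$ fails even with the sharper pairs of Lemma~\ref{L: 4.3 in Lemos}. For $q=1021$ we have $1021^{1/10}<2$, so you are forced to take $(a,b)=(2,510)$. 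Then $q^{n/10}\ge 4\cdot 2^{n/2+510}$ requires $n\ge 1025$, so any threshold obtained this way exceeds $1021^{1024}>10^{3081}$. Exponent $1/9$ is worse.
\end{itemize}

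\textbf{What produces $3.31\times10^{2821}$.} The constant is $P_{852}=\prod_{i\le 852}\hat p_i\approx 3.308\times10^{2821}$.
\begin{itemize}
\item Here $852$ is the least $r$ with $r\log 2\le\frac{1}{11}\log P_r$. So Lemma~\ref{L: 4.1 in Lemos} gives $W(q^n-1)\le (q^n)^{1/11}$, with no multiplicative constant, once $q^n-1\ge P_{852}$.
\item The criterion then becomes $q^{3n/22}\ge 4\cdot 2^{n/a+b}$, so $2^{n/a}$ must fit inside $q^{3n/22}$. This fixes the choice of pairs from Lemma~\ref{L: 4.3 in Lemos}:
\begin{itemize}
\item $(1,0)$ for $q\ge 163$, since $2^{22/3}\approx 161.3$;
\item $\bigl(2,\frac{q-1}{2}\bigr)$ for $13\le q\le 157$, since $2^{11/3}\approx 12.7$;
\item $\bigl(3,\frac{q^2+3q-4}{6}\bigr)$ for $q=7,11$;
\item $\bigl(4,\frac{q^3+3q^2+5q-9}{12}\bigr)$ for $q=5$.
\end{itemize}
\item In each case check $\bigl(\frac{3}{22}-\frac{\log 2}{a\log q}\bigr)\log(q^n)\ge \log 4+b\log 2$ at $\log(q^n)=\log P_{852}\approx 6496.8$. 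This suffices because the left side increases with $q^n$.
\item The tightest case is $q=163$, where the left side is about $1.86$ against $\log 4\approx 1.39$.
\end{itemize}
Without this computation, ``optimizing $t$ yields the stated threshold'' is an assertion rather than a proof. With the choices you sketch, it is false.
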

This follows from estimating character sums and sieve methods. We prove an improved classification result for $N_4$ via a prime sieve argument together with a ramified lower bound on $q^n-1$ as in~\cite{bagger2026hybrid}.
\begin{theorem}\label{T: main classification result}
Let $q$ be a prime power with $\Char\F_{q^n} \geq 5$, and $n \geq 2$ be an integer. Let $E_4$ be the set of $714$ possible exceptional pairs $(q,n)$ listed in the appendix, the largest such $q$ being $326041.$ Then
$$ (q,n) \notin \left\{(q,2): q \leq 3.82 \times 10^{15} \right\} \cup E_4 \quad \text{implies} \quad (q,n) \in N_4. $$
\end{theorem}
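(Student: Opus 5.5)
Write $Q=q^n$. The plan has four stages: a character-sum criterion, a prime sieve, a ramified bound on $\omega(Q-1)$, and a finite computation.

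\textbf{Stage 1: character-sum criterion.} Fix $\beta\in\F_{Q}^*$. Let $N(\beta)$ count the $\alpha$ for which $\alpha,\alpha+\beta,\alpha+2\beta,\alpha+3\beta$ are all primitive and $\alpha$ is normal. It suffices to show $N(\beta)>0$ for every $\beta$.

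Express the counts in the usual way. Primitivity uses Vinogradov's formula for the characteristic function of $e$-free elements, with $e\mid Q-1$. Normality uses the additive analogue for $g$-free elements, with $g\mid x^n-1$. This writes $N(\beta)$ as a main term
$$\theta(Q-1)^4\,\Theta(x^n-1)\,Q$$
plus a sum of mixed character sums of the form
$$\sum_\alpha \chi_1(\alpha)\chi_2(\alpha+\beta)\chi_3(\alpha+2\beta)\chi_4(\alpha+3\beta)\psi(\alpha).$$
By Weil's bound each such sum is at most $4Q^{1/2}$ whenever some character is nontrivial. The $\alpha+k\beta$ are distinct because $\Char\F_Q\ge 5$.

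This gives the basic criterion
$$Q^{1/2}>4\,W(Q-1)^4\,W(x^n-1),$$
where $W$ denotes the number of squarefree divisors.

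\textbf{Stage 2: prime sieve.} Refine Stage 1 with the standard Cohen--Huczynska prime sieve. Choose a core $e\mid Q-1$ and sieving primes $p_1,\dots,p_s$ dividing $Q-1$ but not $e$. Set
$$\delta=1-4\sum_i\frac{1}{p_i}>0,\qquad \Delta=2+\frac{4s-1}{\delta},$$
where the factor $4$ reflects that a prime can fail primitivity at any of the four terms. The sieve condition then becomes
$$Q^{1/2}>4\,W(e)^4\,W(x^n-1)\,\Delta.$$

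I would not sieve on the additive side, except possibly for very small $n$. Instead I would bound $W(x^n-1)\le 2^{n}$, or better $2^{\min(n,\,(n+\text{stuff})/2)}$ via Lenstra--Schoof type estimates. For $n$ large compared with $q$, I would use the standard trick that the number of distinct irreducible factors of $x^n-1$ is at most about $n/2$ plus small terms.

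\textbf{Stage 3: ramified bound on $\omega$.} Next, bound $\omega(Q-1)$ using the ramified lower bound from~\cite{bagger2026hybrid}. If $Q-1$ has $\omega$ prime factors, then $Q-1$ is at least the product of the first $\omega$ primes, refined by restricting which primes can appear. For $n=2$ this restriction is that the relevant primes divide $q^2-1$; generally one uses divisibility by $q-1$ and by $\Phi_d(q)$, whose prime factors satisfy congruence conditions modulo $d$.

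This converts the sieve inequality into an explicit bound of the form $\omega(Q-1)\le\omega_0$. Iterating then yields a bound $Q\le Q_0$, dramatically improving the $3.31\times10^{2821}$ of Proposition~\ref{P: 4.6 in Lemos}. Then split into cases:
\begin{itemize}
\item for $n\ge3$, the resulting bound on $q$ is small enough (up to about $3\times10^5$) to enumerate;
\item for $n=2$, the bound $4.$ on $Q=q^2$ leaves the range $q\le3.82\times10^{15}$, which is simply excluded in the statement.
\end{itemize}

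\textbf{Stage 4: computation, the main obstacle.} For each remaining $(q,n)$ with $n\ge3$, and with $n=2$ only beyond the excluded range, factor $Q-1$ and $x^n-1$ over $\F_q$. Then search for a sieve choice (core and sieving primes) that satisfies the inequality, trying the optimal choice of including the smallest primes in $e$.

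The pairs that survive every sieve choice form the candidate exceptional list. I would not try to resolve them by direct search over all $\beta$, since that is infeasible for large $q$; they are listed as $E_4$, giving 714 pairs with largest $q=326041$.

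The delicate points are, first, getting the ramified bound to close the $n\ge3$ cases with only finitely many explicit pairs. Second, the $n=2$ regime, where $W(x^2-1)=4$ is fine but the product $W(e)^4$ grows too fast, forcing the large excluded range.
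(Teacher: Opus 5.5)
\textbf{Gap: no sieving on the additive side.} You decide not to sieve over the irreducible factors $h_i$ of $x^n-1$, and this breaks the argument. The criterion the paper uses, Proposition~\ref{Sieving result}, also sieves over the $h_i$ coprime to $g$, at a cost of $\sum_i q^{-\deg h_i}$ in $\delta$ and $s$ in $\Delta$. Since $E_4$ is exactly the output of that optimised sieve, a weaker sieve leaves more survivors, and yours does. For $q>10^4$ the paper takes $g=1$, which costs at most $n/q<10^{-3}$ in $\delta$ but removes the factor $W(x^n-1)$ entirely. For the leftover pairs with $q\in\{5,7,11\}$ and $n\ge 24$ it needs an intermediate $g$.

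Take $(q,n)=(5,24)\notin E_4$. The primes dividing $5^{24}-1$ are $2,3,7,13,31,313,601,390001$. Over $\F_5$, $x^{24}-1=\prod_{a\in\F_{25}^*}(x-a)$ has $4$ linear and $10$ quadratic factors, so $W(x^{24}-1)=2^{14}$. Your Stage~2 inequality then needs $\Delta W(e)^4<5^{12}/2^{16}<3726$, which forces $\omega(e)\le 2$. Then at least six of the eight primes are sieved, and $\delta\le 1-4\left(\frac{1}{7}+\frac{1}{13}+\frac{1}{31}+\frac{1}{313}+\frac{1}{601}\right)<0$, so no core works. The full additive sieve also fails, since $\frac{4}{5}+\frac{10}{25}>1$. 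A mixed choice does work: take $g=x^4-1$ and $e=42$, and sieve the ten quadratics and the primes $13,31,313,601,390001$. This gives $\delta\approx 0.144$, $\Delta\approx 203.6$ and $4\Delta W(g)W(e)^4\approx 5.3\times 10^7<5^{12}$. Your hedge about sieving for very small $n$ does not reach this case. So your procedure outputs a list strictly larger than $E_4$, and asserting that the survivors are exactly the $714$ listed pairs is not supported by your method.

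\textbf{Consequence for $n=2$.} The excluded bound $3.82\times 10^{15}$ is $\sqrt{M_{22}}$, with $M_{22}=1.454\times 10^{31}$ computed with $g=1$. The case $\omega=23$ is eliminated against $q^2\ge 1+4P_{23}\approx 1.07\times 10^{33}$, where $P_{23}$ is the product of the first $23$ primes, by a factor of less than $2$. Keeping $W(x^2-1)=4$, as you propose, multiplies the sieve bound by $16$, only mildly offset by the smaller $\Delta$. At $\omega=23$ the optimum (at $r=12$) becomes about $9\times 10^{33}$. Pairs $(q,2)$ with $\omega(q^2-1)=23$ and $q$ up to roughly $10^{17}$ then survive, and the $\omega=22$ window extends to about $q\le 1.5\times 10^{16}$. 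This region lies well beyond what the statement excludes and is far too large to search.

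\textbf{A lesser inaccuracy.} In Stage~3 you treat the largest surviving exception, $q=326041$, as if it were the search bound. For $n=3,4$, comparing $M_\omega$ with the ramified bound still leaves $q^n$ up to about $4\times 10^{30}$. The paper reaches exhaustively checkable ranges only through the prime divisor tree. That tree branches on whether $3,5,7,\dots$ divide $q^n-1$, which sharpens $M_\omega$ and the ramified bound at the same time.
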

In the range $m \in [5,10]$, we also obtain an improvement.
\begin{theorem}\label{T: medium m}
Let $q$ be a prime power with $\Char\F_{q} \geq m$. Then $(q,n) \in N_m$, provided
\begin{tasks}(2)
    \task $m=5$ \ and $q^n\geq 8.6 \times 10^{171}$
    \task $m=6$ \ and $q^n\geq 1.3 \times 10^{384}$
    \task $m=7$ \ and $q^n\geq 6.8 \times 10^{895}$
    \task $m=8$ \ and $q^n \geq 1.5 \times 10^{2121}$
    \task $m=9$ \ and $q^n \geq 8.5 \times 10^{5086}$
    \task $m=10$ and $q^n \geq 3.9 \times 10^{12228}$.
\end{tasks}
\end{theorem}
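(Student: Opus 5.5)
We follow the standard character-sum and prime-sieve route used for Proposition~\ref{P: 4.6 in Lemos}, but with a sharper sieve. Fix $\beta\in\F_{q^n}^*$. Since $\alpha\mapsto\alpha\beta^{-1}$ is a bijection, we count $\alpha$ with $\alpha+i\beta$ primitive for $0\le i\le m-1$ and $\alpha$ (say the first term) normal. We use the characteristic functions of $e$-free elements (via multiplicative characters, with Vinogradov's formula) and of $g$-free elements in the additive $\F_q[x]$-module sense (via additive characters). This expresses the count $N(e_0,\dots,e_{m-1},g)$ as a main term $\theta(e)^m\Theta(g)\,q^n$ plus error terms. Each error term is a sum over tuples of characters $(\chi_0,\dots,\chi_{m-1},\psi)$, not all trivial, of
\[
\sum_{\alpha}\chi_0(\alpha)\chi_1(\alpha+\beta)\cdots\chi_{m-1}(\alpha+(m-1)\beta)\,\psi(\alpha).
\]
Because the shifts $\alpha+i\beta$ are distinct (we use $m\le\Char\F_q$), Weil's bound for mixed character sums gives $|\cdot|\le m\,q^{n/2}$. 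This yields the basic criterion
\[
q^{n/2}> m\,W(q^n-1)^m\,W_{q}(x^n-1),
\]
where $W$ counts squarefree divisors. Using $W_q(x^n-1)\le 2^n$ alone is too weak, so we bound the normal part crudely by $2^n\le q^{n/\log_2 q}$, or better, sieve it as well.

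Next we apply the prime sieve in the multi-variable form of Cohen--Huczynska, as extended to $m$ primitive conditions. Write $\mathrm{Rad}(q^n-1)=k p_1\cdots p_s$, where $k$ is the product of the small primes, and keep $g=x^n-1$ unsieved (or sieve it too). Then $(q,n)\in N_m$ whenever
\[
q^{n/2}> m\,W(k)^m\,2^n\left(\frac{ms-1}{\delta}+2\right),\qquad \delta=1-m\sum_{i=1}^s \frac1{p_i}>0 .
\]
The factor $m$ in front of $\sum 1/p_i$ comes from the fact that each sieved prime must be removed from each of the $m$ terms. To make the criterion usable, bound $W(k)\le c_{t}\,(q^n)^{1/t}$ with explicit constants $c_t$ computed from the products of primes. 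Also bound $2^n$ using $n\le\log q^n/\log 5$. Then, for each $m\in\{5,\dots,10\}$, choose $k$ to consist of the primes below some threshold $P_m$, so that $\delta$ stays positive and $s$ is bounded in terms of $\omega(q^n-1)$. Finally, optimise $t$ and the threshold numerically to find the smallest $q^n$ beyond which the inequality holds.

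This reduces the $m=5$ case to checking $q^n\ge 8.6\times10^{171}$, and similarly for larger $m$. The constants grow roughly like $W^m$, which explains the near-doubling of the exponent as $m$ increases.

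The main obstacle is the sieve optimisation. Because of the factor $m$ in $\delta$, the sieving primes must be large, so $W(k)^m$ dominates. We would first try a ramified lower bound on $q^n-1$, as in~\cite{bagger2026hybrid}: if $\omega(q^n-1)=\omega$, then $q^n-1$ is at least the product of the first $\omega$ primes. We would split into cases by $\omega$, pick the best sieving set for each $\omega$, and show the criterion holds for all $\omega$ above some $\omega_0$. The bound on $q^n$ then comes from the largest unresolved $\omega$, and tuning this case split is where the stated numerical thresholds are determined.
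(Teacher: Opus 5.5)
\textbf{Where the proposal breaks down: the bound on $W(x^n-1)$.} Your two-stage plan matches the paper's. First comes an a priori bound from a uniform estimate on $W(q^n-1)$; the paper takes $e=q^n-1$, $g=x^n-1$ and Lemma~\ref{L: 4.1 in Lemos}, getting $q^n\ge P_r$ with $(k,r)=(13,2828)$ for $m=5$. Then comes the prime sieve for each fixed $\omega$, compared with the primorial $\prod_{i\le\omega}\hat{p}_i$. That primorial is the trivial bound, not a ramified one; no ramified bound is used for $m\ge5$. The final threshold is $M'_u$ for the largest unresolved $\omega=u$ ($u=82$ when $m=5$).

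The gap is your bound on the normal-element factor $W(x^n-1)$. You use $2^n\le (q^n)^{\log 2/\log 5}$ uniformly in $q$. That leaves only $(q^n)^{1/2-\log_5 2}\approx (q^n)^{0.069}$ on the left of the criterion. Every per-$\omega$ threshold then becomes $C_\omega^{2/(1-\log_5 4)}\approx C_\omega^{14.4}$, where $C_\omega=m\bigl(2+\frac{mr-1}{\delta}\bigr)2^{m(\omega-r)}$.

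The paper avoids this with Lemma~\ref{L: 4.3 in Lemos}, which gives $W(x^n-1)\le 2^{n/a+b}$. It combines this with a split of $q$ into ranges, using $2^{n/a}\le q^{n\log_c 2/a}$ for $q\ge c$. The triples are $(1,0,41)$, $(2,\frac{q-1}{2},q)$ for $7\le q\le 37$, and $(3,6,5)$. The worst exponent is then about $2/(1-\log_{41}4)\approx 3.19$. Since $q=5$ is admissible when $m=5$, some device of this kind is unavoidable.

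\textbf{Why the loss is fatal for the stated constants.} The value $M'_{82}\approx 8.6\times10^{171}$ corresponds to $C_{82}\approx10^{54}$. Your exponent turns the same $C_{82}$ into roughly $10^{780}$, far above $P_{82}\approx 10^{171}$. So the cutoff in $\omega$ moves out by orders of magnitude, and the resulting threshold is nowhere near $8.6\times10^{171}$.

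The same problem appears in your first stage. Bounding $W(k)\le c_t(q^n)^{1/t}$ forces $m/t<\tfrac12-\log_5 2$, i.e.\ $t>72$ for $m=5$. Then $c_t$ involves all primes below $2^{72}$, and the initial range of $\omega$ you would have to stratify becomes astronomically large. The paper can work with $W(q^n-1)\le q^{n/13}$ for $m=5$ precisely because Lemma~\ref{L: 4.3 in Lemos} keeps the $q$-dependence of $W(x^n-1)$ mild.

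Your remark that one could ``sieve the polynomial part too'' might in principle substitute for that lemma, but you never set it up. As written, the proposal asserts the thresholds $8.6\times10^{171},\dots,3.9\times10^{12228}$ without a method capable of producing them.
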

For longer progressions where $m\geq 11$, we instead prove a general result.
\begin{theorem}\label{General m theorem (m>=11)}
Let $m \geq 11$ be an integer, and define
\begin{align*}
A(m)&=\frac{\log\log(m^2)} {\log\log(m^2-1)} & \text{ and } & &B(m)=\frac{1.919A(m)}{\left(0.999-\frac{\log 4}{\log m}\right)}.
\end{align*}
Then, for every odd prime power $q$ with $\Char \F_q \geq m$ and for every $n\geq2$,
$$ q^n\geq \exp\left(\exp(m\cdot B(m))\right)
\quad\text{ implies }\quad
(q,n)\in N_m.$$
\end{theorem}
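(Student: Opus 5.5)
We count pairs via characters. Fix $\beta\in\F_{q^n}^*$ and let $N(\beta)$ be the number of $\alpha\in\F_{q^n}$ with $\alpha+i\beta$ primitive for $0\le i\le m-1$ and $\alpha$ normal. By symmetry we may require normality of the first term; this only makes the condition stronger. Vinogradov's formula for the primitive indicator and the additive-character formula for the $\F_q$-order give
$$N(\beta)=\theta(q^n-1)^m\,\Theta(x^n-1)\sum_{d_0,\dots,d_{m-1}\mid q^n-1}\ \sum_{g\mid x^n-1}\frac{\mu(d_0)\cdots\mu(d_{m-1})\mu'(g)}{\phi(d_0)\cdots\phi(d_{m-1})\Phi(g)}\sum_{\chi_{d_i},\psi_g}S,$$
where $S=\sum_\alpha\prod_i\chi_{d_i}(\alpha+i\beta)\psi_g(\alpha)$. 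The term with all characters trivial contributes $q^n$. For the remaining terms, the Weil bound for mixed sums applies, because $\prod_i(x+i\beta)^{e_i}$ is not a perfect power when the $i\beta$ are distinct. This distinctness holds since $m\le\Char\F_q$. It gives $|S|\le m\,q^{n/2}$, or $(m-1)q^{n/2}$ when $g=1$. Hence $N(\beta)>0$ whenever $q^{n/2}> m\,W(q^n-1)^m\,W(x^n-1)$, with $W$ counting squarefree divisors.

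This crude criterion is too weak, so the main tool will be a prime sieve in the style of Cohen--Huczynska. Choose a core $e\mid q^n-1$ and sieving primes $p_1,\dots,p_s$ with $\mathrm{rad}(q^n-1)=e\,p_1\cdots p_s$. Set $\delta=1-m\sum_j 1/p_j$. We ignore the normality part for large $n$; we handle it by bounding $W(x^n-1)\le 2^{n}$ and noting $2^n\le (q^n)^{\log 2/\log q}$, which is negligible because $q\ge m\ge 11$. Positivity then follows from
$$q^{n/2}> m\,W(e)^m\,W(x^n-1)\left(\frac{ms-1}{\delta}+2\right).$$
The heart of the argument is bounding $W(q^n-1)$. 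Write $\omega=\omega(q^n-1)$. We take $e$ to be the product of the primes below some threshold; the natural choice is primes $<m^2$, which is where $A(m)=\log\log m^2/\log\log(m^2-1)$ comes from. The sieving primes are then all $\ge m^2$, so
$$\delta\ge 1-m\sum_{m^2\le p\le p_{\omega}}\frac1p\approx 1-m\log\frac{\log p_\omega}{\log m^2}.$$
We need $\delta>0$ with room to spare. Combining this with the ramified lower bound $q^n-1\ge\prod_{p\le p_\omega}p$ in the style of \cite{bagger2026hybrid}, we get $\log\log q^n\gtrsim\log p_\omega$. Hence the condition $\log p_\omega\le \log(m^2)\exp\bigl(c/m\bigr)$ will follow once $\log\log q^n$ is large.

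The final estimate bounds $W(e)^m\le 2^{m\pi(m^2)}$, which is of size $\exp(O(m^3/\log m))$. The count $s$ of sieving primes is at most $\omega\le\log q^n/\log m^2$. Taking logarithms, the inequality becomes
$$\tfrac12\log q^n>m\pi(m^2)\log2+\log(m^2 s/\delta)+n\log 2.$$
It suffices to show that this holds when $\log\log q^n\ge mB(m)$. The factor $0.999-\log4/\log m$ in $B(m)$ reflects absorbing $\log 2$-type losses and $n\log2$ relative to $\log q$. The constant $1.919$ should come from explicit Rosser--Schoenfeld bounds on $\pi$ and $\sum 1/p$, valid for $m\ge 11$.

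I expect the main obstacle to be making $\delta$ positive uniformly. With a fixed cutoff $m^2$, the sum $m\sum 1/p$ over the large primes grows like $m\log(\log p_\omega/\log m^2)$. This forces $\log p_\omega\le(\log m^2)(1+O(1/m))$, which in turn limits $\omega$. The work here is to show that once $\omega$ exceeds that range, the crude bound for very large $q^n$ already applies. The cleanest route I would try first is the following. Either $\omega\le\pi(m^2)+K$, and then the sieve with $\delta$ explicitly bounded works; or $\omega$ is large, and then $q^n\ge p_\omega\#$ is doubly exponential in $m B(m)$, where the unsieved inequality with $W(q^n-1)\le 2^\omega$ and $\omega\ll\log q^n/\log\log q^n$ wins. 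The monotonicity in $m$ of the explicit inequalities will then be checked at $m=11$ and extended to all $m$.
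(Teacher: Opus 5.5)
\textbf{There is a genuine gap: the sieve in your Case 1 fails as set up, and the argument that actually proves the theorem is your Case 2, stated too vaguely.}

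The failing step is the bound $\delta\ge 1-m\sum_{m^2\le p\le p_\omega}1/p$. It holds only if every prime below $m^2$ divides $q^n-1$. In general the core $e$ may contain little more than the prime $2$. So up to $\omega-1$ prime divisors of $q^n-1$ can lie above $m^2$, in the worst case the smallest primes there. For $\omega\le\pi(m^2)$ your formula even returns $\delta\ge 1$. Concretely, for $m=11$ the fifteen primes $127,\dots,197$ give $11\sum 1/p\approx 1.04$. Hence $\delta<0$ is possible already at $\omega=16$, well inside your range $\omega\le\pi(121)+K=30+K$. So the ``main tool'' does not cover Case 1.

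Two smaller inaccuracies:
\begin{itemize}
\item For $\omega$ just above $\pi(m^2)+K$, the primorial $p_\omega\#\approx e^{m^2}$ is not doubly exponential in $m$.
\item The loss from $W(x^n-1)\le 2^n$ is not negligible. It produces the term $\log 4/\log m\approx 0.58$ in $B(m)$ at $m=11$.
\end{itemize}

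The missing idea is that the crude criterion you derived at the start is not too weak. Applied uniformly in $\omega$, it is the paper's entire proof, and it is your Case 2 made explicit. Take $e=q^n-1$ and $g=x^n-1$, so there are no sieving primes and $\delta=1$. Use $W(x^n-1)\le 2^n$ and Robin's bound $\omega(N)\le 1.3841\log N/\log\log N$ for $N\ge 3$. With $X=q^n$ this gives $W(q^n-1)\le X^{1.3841\log 2/\log\log(X-1)}$. Each constant in $B(m)$ then has a direct source:
\begin{itemize}
\item $A(m)$ comes from $X\ge m^2$ and the monotonicity of $\log\log x/\log\log(x-1)$. Together these give $1/\log\log(X-1)\le A(m)/\log\log X$.
\item $1.919$ is $2\cdot 1.3841\log 2$, where the factor $2$ comes from squaring $q^{n/2}\ge m\,2^n\,W(q^n-1)^m$.
\item $\log 4/\log m$ comes from $4^n\le X^{\log 4/\log m}$, valid since $q\ge m$.
\end{itemize}
The criterion becomes $(1-1.919mA(m)/\log\log X-\log 4/\log m)\log X\ge 2\log m$. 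If $\log\log X\ge mB(m)$, the bracket is at least $0.001$; this is where the $0.999$ comes from. What remains is $X\ge m^{2000}$. This follows from $X\ge\exp(\exp(mB(m)))$ for $m\ge 11$, because $mB(m)>1.92m>\log 2000+\log\log m$.

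Your attributions of $A(m)$ to a cutoff at $m^2$ and of $1.919$ to Rosser--Schoenfeld are therefore mistaken. To fix the proof, drop the sieve entirely and replace the unquantified $\omega\ll\log q^n/\log\log q^n$ with Robin's explicit constant. Without that constant you cannot reach the stated threshold.
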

This result out-performs the asymptotic found in~\cite{lemos-neumann-ribas-2023} whenever $m\geq 53$.

The outline of this paper is as follows. In Section 2, we develop some elementary results. In Section 3, we refine the bound on $q^n$ by considering various choices of polynomials $g\in \Z[x]$ and partition the range in which exceptions can lie by the extension degree $n$. Whenever $q\leq 10^4$, we resolve $N_4$-membership by direct computation. In Section 4, we handle the cases where $n=3,4$ and $5$ through a mix of branching techniques and the ramified bound, culminating in the proof of Theorem \ref{T: main classification result}. In Section 5, we discuss applications to larger values of $m$, proving Theorems~\ref{T: medium m} and~\ref{General m theorem (m>=11)}. All computations in this article were carried out using SageMath~\cite{sagemath} and Python.

\section{Preliminaries}
In order to state our main sieving criterion for $N_m$ membership, we require a weakening of the notion of primitiveness. For each divisor $e\mid q^n-1$, call an element $\gamma \in \F_{q^n}^*$ \emph{e-free} if there is no $\beta \in \F_{q^n}^*$ satisfying $\gamma = \beta^d$, for any $d>1$ dividing $e$. It follows that $\gamma \in \F_{q^n}^*$ is primitive if and only if it is $(q^n-1)$-free. Similarly, we may define the notion of $g$-freeness for $g\in \F_q[x]$, a divisor of $x^n-1$. Call $\alpha \in \F_{q^n}$ \emph{g-free} if, for any divisor $h \mid g$ with $h = \sum_{i=0}^ra_ix^i \in \F_q[x] \setminus \{1\}$, we have
$$ h \circ \beta := \sum_{i=0}^r a_i \beta^{q^i} \neq \alpha \quad \text{for any } \beta \in \F_{q^n}.$$
Note that the action 
$$\circ:\F_q[x] \times \F_{q^n} \to \F_{q^n} $$ 
endows $\F_{q^n}$ with a $\F_q[x]$-module structure. Throughout, we let $W(x)$ be the square-free divisor counting function, $\omega(x)$ be the prime divisor counting function and $\varphi(x)$ be Euler's totient function. We will regularly use the shorthand $\omega := \omega(q^n-1)$, whenever the choice of field $\F_{q^n}$ is clear. Let $\ord_a(x)$ denote the multiplicative order of $x$ in $(\Z/a\Z)^*$, which is well-defined whenever $(a,x) = 1$. For a given polynomial $g\in \F_q[x]$, define $W(g)$ to be the amount of square-free monic divisors of $g$. Throughout, we will assume that $\Char \F_q \geq 5$. We are now in a position to state the main sieving criterion for $N_m$ membership.

\begin{proposition}\label{Sieving result}
Let $q$ be a prime power, $n\geq 2$ and $m\geq 3$, all integers. Consider divisors $e \mid q^n-1$, $g \mid x^n-1$ and take $\{p_1,p_2, \ldots, p_r \} \subset \Z$, $\{h_1,h_2,\ldots, h_s\} \subset \F_q[x]$ to be the set of prime factors of $q^n-1$, coprime to $e$, and the set of monic irreducible factors of $x^n-1$, coprime to $g$, respectively. Suppose
$$\delta:= 1-\sum_{i=1}^{s}\frac{1}{q^{\deg h_i}}-m\sum_{i=1}^{r}\frac{1}{p_i}>0. $$
Then
\begin{equation} \label{Sieve criterion}
 q^{\frac{n}{2}}\geq m \left(2 + \frac{mr+s-1}{\delta}\right) W(g)W(e)^m \quad \text{implies} \quad (q,n) \in N_m.  
\end{equation}

\end{proposition}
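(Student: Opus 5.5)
We follow the standard Cohen--Huczynska prime sieve, adapted to $m$ multiplicative conditions and one additive condition. Fix $\beta\in\F_{q^n}^*$. For divisors $e'\mid q^n-1$ and $g'\mid x^n-1$, let $N(e',g')$ be the number of $\alpha\in\F_{q^n}$ such that $\alpha+i\beta$ is $e'$-free for every $0\le i\le m-1$ and $\alpha$ is $g'$-free. An $\alpha$ with $N(q^n-1,x^n-1)$-type properties makes $P(\alpha,\beta)$ $m$-primitive, since $\alpha$ is then normal; putting normality on the first term is harmless. The goal is therefore to show $N(q^n-1,x^n-1)>0$.

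\emph{Step 1 (character sum estimate).} Write the characteristic functions of $e'$-free and $g'$-free elements via Vinogradov's formulas. For $e'$-freeness use $\theta(e')\sum_{d\mid e'}\frac{\mu(d)}{\varphi(d)}\sum_{\ord\chi=d}\chi$. For $g'$-freeness use the additive analogue $\Theta(g')\sum_{h\mid g'}\frac{\mu(h)}{\Phi(h)}\sum_{\ord\psi=h}\psi$, where $\theta(e')=\varphi(e')/e'$ and $\Theta(g')=\Phi(g')/q^{\deg g'}$. Expanding the product gives
\[
N(e',g')=\theta(e')^m\Theta(g')\sum_{d_1,\dots,d_m\mid e'}\sum_{h\mid g'}\frac{\mu(d_1)\cdots\mu(d_m)\mu(h)}{\varphi(d_1)\cdots\varphi(d_m)\Phi(h)}\sum_{\chi_i,\psi}\sum_{\alpha}\prod_{i}\chi_i(\alpha+i\beta)\,\psi(\alpha).
\]
The trivial term contributes $q^n$. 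The terms with all $\chi_i$ trivial and $\psi$ nontrivial vanish, except that we must discard the at most $m$ values of $\alpha$ with $\alpha+i\beta=0$ consistently; this is the source of the additive constant in the final bound. For every other term we apply Weil's bound for mixed sums. Since $\alpha\mapsto\prod(\alpha+i\beta)^{k_i}$ is not a $d$-th power when some $\chi_i$ is nontrivial (the roots $-i\beta$ are distinct because $m\le\Char\F_q$), the sum is at most $m q^{n/2}$. Counting the characters gives
\[
N(e',g')\ge\theta(e')^m\Theta(g')\bigl(q^n-m-m\,q^{n/2}W(e')^mW(g')\bigr),
\]
so a condition of the form $q^{n/2}> mW(e')^mW(g')$, up to lower-order corrections, forces $N(e',g')>0$.

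\emph{Step 2 (sieve inequality).} We use the sieve: for coprime decompositions,
\[
N(q^n-1,x^n-1)\ge\sum_{j=1}^r N(ep_j,g)\cdot[\text{$p_j$ in one position}]+\sum_{k}N(e,gh_k)-(mr+s-1)N(e,g).
\]
Concretely, being $q^n-1$-free on all $m$ terms fails only if some term fails $p_j$-freeness. This gives $mr$ "one-prime-at-one-position" sieving conditions plus $s$ additive ones, which yields
\[
N\ge\sum_{i,j}N_{i,j}+\sum_k N_k-(mr+s-1)N(e,g).
\]
Write each differing count as $N(e,g)$ minus a difference. The main term becomes $\delta\,\theta(e)^m\Theta(g)q^n$. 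The multiplicative sieving weight $1/p_j$ appears $m$ times, once per position, which explains the factor $m\sum 1/p_i$ in $\delta$. In each difference only the characters of order divisible by $p_j$ (respectively by $h_k$) survive, so each difference is bounded by $(1-1/p_j)$ or the analogous factor, times $\theta(e)^m\Theta(g)\,m q^{n/2}W(e)^mW(g)$.

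\emph{Step 3 (conclusion).} Collecting terms gives
\[
N>\theta(e)^m\Theta(g)\Bigl(\delta q^n - m q^{n/2}W(e)^mW(g)\bigl(mr+s-1+2\delta\bigr)\Bigr),
\]
where the factor $2\delta$ absorbs the $m$ zeros and the lower-order terms using $q^{n/2}\ge 1$. This is positive whenever $q^{n/2}\ge m(2+\frac{mr+s-1}{\delta})W(g)W(e)^m$.

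The main obstacle is Step 2's bookkeeping for the $m$ positions. One must check that sieving a prime $p_j$ at position $i$ alone uses characters $\chi_i$ of order divisible by $p_j$ with the other $\chi_{i'}$ ranging over $e$-divisors. This gives the correct factor $W(e)^m$ rather than $W(ep_j)W(e)^{m-1}$ inflated. One must also handle carefully the elements $\alpha=-i\beta$, where a multiplicative character is evaluated at $0$; this is what produces the "$2+$" instead of "$1+$".
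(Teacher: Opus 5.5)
Your route is the one the paper relies on: a Cohen--Huczynska-type sieve as in Lemos, Neumann and Ribas, with normality imposed on the first term $\alpha$. Steps 1--2 are essentially right, including your point that sieving $p_j$ at a single position still has total weight $W(e)^mW(g)$. The gap is in Step 3, and your explanation of the constant $2$ is wrong.

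Write $L=mr+s$, and let $\theta_\ell$ denote $1-1/p_j$ (one for each of the $mr$ pairs $(i,j)$) or $1-q^{-\deg h_k}$. Your difference bounds sum to $\sum_\ell\theta_\ell=mr+s-m\sum_j p_j^{-1}-\sum_k q^{-\deg h_k}=L-1+\delta$, not $L-1$. So one $\delta$ is consumed there, and the other by the Weil error in $\delta N(e,g)$. Your Step 1 bound carries a separate $-m$, so collecting terms actually gives
\begin{equation*}
N\ge\theta(e)^m\Theta(g)\left(\delta q^n-\delta m-mq^{n/2}W(e)^mW(g)\left(L-1+2\delta\right)\right).
\end{equation*}
Under the hypothesis this only yields $N\ge-\delta m\,\theta(e)^m\Theta(g)$. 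Absorbing $\delta m$ via $q^{n/2}\ge1$, as you propose, costs a third $\delta$ and proves the criterion with $3$ in place of $2$.

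The repair is to absorb the zeros inside Step 1 rather than in Step 3. The trivial tuple contributes $q^n-m$. The tuples with every $\chi_i$ trivial and $\psi$ nontrivial (total weight $W(g)-1$) do not vanish; each equals $-\sum_i\psi(-i\beta)$, of modulus at most $m$. Only the remaining weight $(W(e)^m-1)W(g)$ incurs the Weil error $mq^{n/2}$. Hence
\begin{equation*}
N(e,g)\ge\theta(e)^m\Theta(g)\left(q^n-mW(g)-mq^{n/2}\left(W(e)^m-1\right)W(g)\right)>\theta(e)^m\Theta(g)\left(q^n-mq^{n/2}W(e)^mW(g)\right),
\end{equation*}
since $q^{n/2}>1$. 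Feeding this into Step 2 gives $N>\theta(e)^m\Theta(g)\left(\delta q^n-mq^{n/2}W(e)^mW(g)(L-1+2\delta)\right)\ge0$ under the hypothesis. The strict inequality also covers the non-strict $\ge$ in the statement.

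In particular, the points $\alpha=-i\beta$ have nothing to do with the ``$2+$''. It is the same constant as in the classical criterion $q^{n/2}>W(e)W(g)\left(2+\frac{r+s-1}{\delta}\right)$: one $\delta$ comes from the error term of $\delta N(e,g)$, and one from $\sum_\ell\theta_\ell=L-1+\delta$.
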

A proof of this fact can be found in~\cite[Proposition 3.8]{lemos-neumann-ribas-2023}. The crucial idea is to consider the counts of sequences $P(\alpha,\beta)$ where all elements are $e$-free for some $e \mid q^n-1$ and at least one element is $g$-free for some $g \mid x^n-1$. By considering the indicator functions for such elements and using a modified inclusion-exclusion argument, one is able to `glue' together these counts into a count of $(q^n-1)$-free sequences with at least one $(x^n-1)$-free element. For a more detailed outline of this method, see the proof of Theorem 1 in~\cite{BaggerPunch2025}. Finally, one can show that an element $\alpha \in \F_{q^n}$ is $(x^n-1)$-free if and only if it is normal, completing the argument.

Since we will often use Proposition~\ref{Sieving result} when $q^n-1$ is large enough to make factoring infeasible, we will apply a slightly modified version of the criterion. This version will depend only on the quantities $n$ and $\omega(q^n-1)$. Indeed, the only dependence on the specific prime factorisation of $q^n-1$ in Proposition~\ref{Sieving result} is in the definition of $\delta$ and the amount of prime factors chosen, denoted by $r$. In addition, we may choose these $r$ factors freely in order to minimize $\delta$. The sum of reciprocals $\sum_{i=1}^rp_i^{-1}$ is bounded above by $\sum_{i=\omega-r+1}^\omega \hat{p}_i^{-1}$, where $\hat{p}_i$ denotes the $i^\text{th}$ rational prime. Moreover, since $\F_q[x]$ is an integral domain, any $g \in \F_q[x]$ splits into at most $\deg g$ linear factors. Therefore $\sum_{i=1}^{s}q^{-\deg h_i}$ is bounded above by $(n - \deg g)q^{-1}$. If we define
\begin{equation*}
\delta_\omega^{'} := 1 - \sum_{i=1}^{s}\frac{1}{q^{\deg h_i}}- m\sum_{i=\omega-r+1}^\omega \frac{1}{\hat{p}_i} \  
\end{equation*}
\begin{equation*}
\delta_\omega := 1 - \frac{n-\deg g}{q} - m\sum_{i=\omega-r+1}^\omega \frac{1}{\hat{p}_i},    
\end{equation*}
we therefore have $\delta \geq \delta'_\omega \geq \delta_\omega$. 
In the cases where $q^n-1$ is large, we invoke Proposition~\ref{Sieving result} with either $\delta_\omega$ or $\delta'_\omega$ instead of $\delta$ to avoid factoring. 

In order to evaluate $W(g)$ for an arbitrary divisor $g \mid x^n-1$, we need a good handle on the monic irreducible factors of $x^n-1$ over $\F_q$. If we work over the rationals instead, we have the factorisation
$$ x^n-1 = \prod_{d\mid n}\Phi_d(x), $$
where $\Phi_d(x)$ is the $d^\text{th}$ cyclotomic polynomial, irreducible over $\Q$. Indeed $\Phi_d$ is monic with coefficients in $\Z$. However, over $\F_q$, these polynomials may be reducible depending on the splitting of $q$ in the cyclotomic extension $\Q(\zeta_d)/\Q$. Assume $q=p^k$ and $p\nmid d$. By cyclotomic reciprocity,
$$ \Phi_d(x) = \prod_{i =1 }^{\varphi(d)/\ord_d(q)} \phi_i(x),$$ 
where $\phi_i(x) \in \F_q[x]$ and $\deg \phi_i = \ord_d(q)$. In particular, if $p \nmid n$, then 
$$ \#\{\text{monic irreducibles of } x^n-1\} = \sum_{d\mid n} \frac{\varphi(d)}{\ord_d(q)}. $$
In the case where $d = p^a d'$ for some $a>0$ and $p\nmid d'$, the situation becomes more delicate since $\Phi_d(x) = \Phi_{d'}(x)^{p^a}$. If $\Char \F_q = p$ and $n = p^{v_p(n)}m$, we have
$$ x^n -1 = \prod_{d \mid m} \Phi_d(x)^{\frac{v_p(n)+1}{2}(1 + \frac{n}{m})}. $$
Thus, if $p\mid n$, we have
$$ \#\{\text{monic irreducibles of } x^n-1\} = \sum_{d\mid np^{-v_p(n)}} \frac{\varphi(d)}{\ord_d(q)}. $$
We avoid these intricacies by using an upper bound on the monic irreducible factors.
\begin{lemma}~\cite[Lemma 4.3]{lemos-neumann-ribas-2023}\label{L: 4.3 in Lemos}
    Let $q$ be a prime power and let $n$ be a positive integer. The number of monic irreducible factors of $x^n-1$ over $\mathbb{F}_q$ is at most $\frac{n}{a}+b,$ where the pair $(a,b)$ can be chosen to be one of the following pairs:
    \begin{align*}
        (1,0), \hspace{1.5cm} \left(2, \frac{q-1}{2}\right), &\hspace{1cm} \left( 3, \frac{q^2+3q-4}{6} \right),\\
        \left(4, \frac{q^3 + 3q^2+5q-9}{12} \right), & \hspace{1cm}  \left(5,\frac{3q^4+8q^3+15q^2+22q-48}{60} \right).
    \end{align*}
\end{lemma}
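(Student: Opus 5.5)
We count monic irreducible factors of $x^n-1$ over $\F_q$ by degree. Let $N_k$ be the number of monic irreducible factors of degree $k$, and let $T=\sum_k N_k$ be the total. Since $x^n-1$ has degree $n$, and repeated factors (when $p\mid n$) only add degree, we have $\sum_k kN_k\le n$.

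The pair $(1,0)$ is immediate: every factor has degree at least $1$, so $T\le \sum_k kN_k\le n$. For a general pair $(a,b)$ we write
\[
T=\sum_{k<a}N_k+\sum_{k\ge a}N_k\le \sum_{k<a}N_k+\frac1a\sum_{k\ge a}kN_k\le \sum_{k<a}N_k+\frac{n-\sum_{k<a}kN_k}{a},
\]
which gives
\[
T\le \frac na+\sum_{k<a}\Bigl(1-\frac ka\Bigr)N_k .
\]
So it suffices to bound $N_k$ for $k<a$.

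The irreducible factors of degree $k$ are distinct, since we count distinct monic irreducibles. Each has roots of order dividing $n$ (prime to $p$), and each is also a factor of $x^{q^k-1}-1$. Crudely, $N_k$ is at most the number of monic irreducibles of degree $k$ over $\F_q$ other than $x$. The sharper route is to note that the roots of degree-$k$ factors lie in the group $\mu_{\gcd(n,q^k-1)}$ but not in smaller subfields. This gives
\[
kN_k\le \#\bigl\{\zeta\in\mu_{q^k-1}:\ \zeta\notin \F_{q^j}\ \text{for } j<k,\ j\mid k\bigr\},
\]
which is essentially the necklace count $\frac1k\sum_{d\mid k}\mu(d)q^{k/d}$ with the factor $x$ removed (only for $k=1$).

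Hence $N_1\le q-1$, $N_2\le (q^2-q)/2$, $N_3\le (q^3-q)/3$, and $N_4\le (q^4-q^2)/4$. Substituting:

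\textbf{$a=2$:} $b=\frac12(q-1)$. ✓.

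\textbf{$a=3$:} $b=\frac23(q-1)+\frac13\cdot\frac{q^2-q}{2}=\frac{q^2+3q-4}{6}$. ✓.

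\textbf{$a=4$:} $b=\frac34(q-1)+\frac12\cdot\frac{q^2-q}2+\frac14\cdot\frac{q^3-q}{3}=\frac{q^3+3q^2+5q-9}{12}$. ✓.

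\textbf{$a=5$:} $b=\frac45(q-1)+\frac35\cdot\frac{q^2-q}{2}+\frac25\cdot\frac{q^3-q}{3}+\frac15\cdot\frac{q^4-q^2}{4}$. Over the common denominator $60$ this is $\frac{3q^4+8q^3+15q^2+22q-48}{60}$. ✓.

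So the whole proof reduces to the splitting inequality and the count of irreducibles. The only real point requiring care, and the main obstacle, is the case $p\mid n$. There, factors repeat, but the argument above uses only distinct irreducibles and the total degree bound $\sum_k kN_k\le n$, so it remains valid.
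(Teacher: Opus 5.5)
Your argument is correct and is essentially the proof behind the cited Lemma 4.3, which the paper quotes without proof. Splitting off the factors of degree $k<a$, bounding $N_k$ by the number of monic irreducibles of degree $k$ (excluding $x$ when $k=1$), and using $\sum_k kN_k\le n$ reproduces exactly the stated constants for $a=2,\dots,5$. Its leading term $q^{a-1}/(a(a-1))$ also matches the paper's remark about pairs $\bigl(k,\frac{q^{k-1}}{k(k-1)}+O(q^{k-2})\bigr)$ for $k\ge 6$, and the detour through $\mu_{\gcd(n,q^k-1)}$ is never used and can be dropped.
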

Following the proof of~\cite[Lemma 4.3]{lemos-neumann-ribas-2023}, one may obtain other pairs $(a,b)$ of the form $\left(k, \frac{q^{k-1}}{k(k-1)} + O(q^{k-2}) \right)$ for any integer $k \geq 6.$ We also require a bound on $W(q^n-1)$ in order to apply Proposition \ref{Sieving result} with $g=x^n-1$ and $e=q^n-1$. 

\begin{lemma}\cite[Lemma 4.1]{lemos-neumann-ribas-2023}\label{L: 4.1 in Lemos}
Let $r$ be a positive integer, $p_1, \ldots, p_r$ be the list of first $r$ prime numbers, and $P_r=p_1\cdots p_r$. For every positive integer $u \geq P_r,$ we have $W(u) \leq u^t$ where $t\in \mathbb{R}$ satisfies $t \geq \frac{r \log 2}{\log P_r}$.    
\end{lemma}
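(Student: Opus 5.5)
The plan is a multiplicative argument that compares $W(u)$ with a power of $u$ prime by prime. Since $W$ is multiplicative with $W(p^k)=2$ for every prime power $p^k$ ($k\ge1$), write $u=\prod_{j} q_j^{k_j}$ with distinct primes $q_j$. Then
\begin{equation*}
\frac{W(u)}{u^t}=\prod_{j}\frac{2}{q_j^{k_j t}}\le \prod_{j}\frac{2}{q_j^{t}},
\end{equation*}
using $t>0$ and $k_j\ge 1$. It therefore suffices to show that the right-hand product is at most $1$.

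The key step is to split the primes dividing $u$ by the size of the factor $2/p^t$. If $p^t\ge 2$, the factor is at most $1$ and can be discarded. If $p^t<2$, we need an upper bound on the factor. Set $t_0=r\log 2/\log P_r$, so that $P_r^{t_0}=2^r$. The idea is to show that the primes with $p^t<2$ are among $p_1,\dots,p_r$, or else to bound their contribution by the contribution of $p_1,\dots,p_r$. In either case the product over the ``bad'' primes is at most
\begin{equation*}
\prod_{i=1}^{r}\frac{2}{p_i^{t}}=\frac{2^r}{P_r^{t}}\le \frac{2^r}{P_r^{t_0}}=1 .
\end{equation*}

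The obstacle is that this is not literally true. A prime $p>p_r$ can still satisfy $p^t<2$ when $t$ is small. Also, $u$ need not be divisible by all of $p_1,\dots,p_r$, and a factor $2/p_i^t$ with $p_i^t>2$ is less than $1$, so the product over $p_1,\dots,p_r$ is not directly an upper bound for the product over the bad primes. I would handle this with the standard Cohen-type extremal argument instead of a termwise comparison.

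Among all integers with exactly $s$ distinct prime factors, the ratio $W(u)/u^t=2^s/u^t$ is maximized, for fixed $s$, by the smallest such integer, namely $u=P_s$. So
\begin{equation*}
\frac{W(u)}{u^t}\le \frac{2^{s}}{P_s^{t}},\qquad s=\omega(u).
\end{equation*}
Now consider the sequence $f(s)=2^s/P_s^{t}$. The ratio of consecutive terms is $f(s+1)/f(s)=2/p_{s+1}^t$, which decreases in $s$. Hence $f$ is unimodal and increases exactly while $p_{s+1}^t<2$.

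I then split by the size of $s$. If $s\ge r$, then $u\ge P_s$ holds automatically. Here I would argue that either $f(s)\le f(r)=2^r/P_r^t\le 1$, or $f$ is still increasing beyond $r$. The hypothesis $u\ge P_r$ is what is meant to rule out the case $s<r$: for $s<r$, the bound $f(s)$ alone fails, and one must use $u\ge P_r$ directly.

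I expect the main difficulty to be making this monotonicity bookkeeping rigorous for the given $t\ge t_0$. In particular, the step most likely to fail is checking that $p_{r+1}^{t_0}\ge 2$, that is, $p_{r+1}^{r}\ge P_r$. This holds because every $p_i\le p_r<p_{r+1}$. This check is what makes $s=r$ a maximum of $f$ from the right. For $s<r$, I would use $u\ge P_r$ to write $u^t\ge P_r^{t}\ge 2^r>2^s$, which gives $W(u)=2^s\le u^t$ directly.
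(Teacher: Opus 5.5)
Your proof is correct. The paper gives no proof of this lemma, since it is imported from Lemos--Neumann--Ribas, so there is nothing in the paper to compare against. Your split on $s=\omega(u)$ is the natural argument. For $s<r$, the hypothesis gives $u\ge P_r$, so $2^s\le 2^r=P_r^{t_0}\le u^t$. For $s\ge r$, you use $u\ge P_s$.

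When you write it up, drop the abandoned prime-by-prime attempt and the hedge ``either $f(s)\le f(r)$ or $f$ is still increasing''. State the $s\ge r$ case directly. For $j>r$ we have $t\ge t_0$, $p_j\ge p_{r+1}$ and $p_{r+1}^{r}>P_r$, so each factor $2/p_j^{t}$ is at most $1$. Hence
\[
\frac{2^s}{P_s^t}\le \frac{2^r}{P_r^t}\le \frac{2^r}{P_r^{t_0}}=1 .
\]
Equivalently, this says that $\log P_s/s$, the mean of $\log p_1,\dots,\log p_s$, is non-decreasing in $s$. Note also that the hypothesis $u\ge P_r$ is only needed when $\omega(u)<r$.
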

Proposition \ref{P: 4.6 in Lemos} was obtained in~\cite{lemos-neumann-ribas-2023} by applying Proposition \ref{Sieving result} with $g=x^n-1$ and $e=q^n-1$. The argument uses Lemma \ref{L: 4.3 in Lemos} and Lemma \ref{L: 4.1 in Lemos} to obtain upper bounds for $W(x^n-1)$ and $W(q^n-1),$ respectively. We omit the details of this argument here and return to these estimates in Section \ref{Section on general m}, where we consider the case of general $m$.

We are now in a position to improve on Proposition~\ref{P: 4.6 in Lemos}. 
\begin{proposition}\label{Reduction of region to 10^82}
    Let $q$ be an odd prime power, and $n\geq 2$ be an integer. Then
    $$ q^n \geq 1.50\times 10^{82} \quad \text{implies} \quad (q,n) \in N_4.$$
\end{proposition}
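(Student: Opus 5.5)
We apply Proposition~\ref{Sieving result} with $m=4$ and $g=x^n-1$, so that $s=0$, the polynomial sum disappears, and $W(g)=W(x^n-1)\le 2^{n}$. On the integer side we take $e$ to be the product of the $\omega-r$ smallest prime divisors of $q^n-1$. The sieved primes are then the $r$ largest prime divisors. Their reciprocal sum is at most $\sum_{i=\omega-r+1}^{\omega}\hat p_i^{-1}$, so $\delta\ge\delta_\omega = 1-4\sum_{i=\omega-r+1}^{\omega}\hat p_i^{-1}$, and $W(e)=2^{\omega-r}$. The sufficient condition becomes
$$ q^{n/2}\ \ge\ 4\left(2+\frac{4r-1}{\delta_\omega}\right)2^{n}\,2^{4(\omega-r)}. $$
This depends only on $q^n$, $n$ and $\omega$.

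\emph{Step 1: bound $\omega$.} First we run the crude argument with $e=q^n-1$. Lemma~\ref{L: 4.1 in Lemos} gives $W(q^n-1)\le (q^n)^{t}$ for a suitable $t$, say $t$ computed from $P_r$ for $r$ around $20$--$40$. Lemma~\ref{L: 4.3 in Lemos} bounds the number of irreducible factors of $x^n-1$. Together these show membership once $q^n$ exceeds some explicit huge number, roughly the $10^{2821}$ of Proposition~\ref{P: 4.6 in Lemos}. Below that threshold, $q^n-1\ge \hat p_1\cdots\hat p_\omega$ forces $\omega\le \omega_{\max}$, a few hundred at most.

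\emph{Step 2: iterate.} For each $\omega\le\omega_{\max}$ we choose $r$ to minimise the right-hand side while keeping $\delta_\omega>0$. Since $4\sum 1/p$ over the largest primes is small, $r$ can be close to $\omega$ minus a modest number. The term $2^n$ is awkward: we split by $n$. For $n$ large relative to $q$, we use a sharper bound on the number of irreducible factors of $x^n-1$ from Lemma~\ref{L: 4.3 in Lemos} (pairs $(a,b)$ with $a=2,3,\dots$), or keep $g$ smaller and sieve polynomial factors too. For $n$ small, $2^n$ is negligible against $q^{n/2}$. Then, for each $\omega$, any failure requires $q^n<X(\omega)$, while also $q^n\ge \hat p_1\cdots\hat p_\omega$. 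The largest $\omega$ for which these two conditions are compatible gives a new threshold, and we recompute $\omega_{\max}$ from it. Iterating until it stabilises should land at $q^n<1.50\times10^{82}$, which corresponds to $\omega$ around $40$.

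\emph{Main obstacle.} The hardest part is handling the $W(x^n-1)$ factor uniformly in $n$ and $q$ so that the final constant really is $1.5\times10^{82}$. When $q$ is small and $n$ is large, $x^n-1$ can have up to $n$ irreducible factors, and $2^n$ competes with $q^{n/2}$ for $q<16$ or so. The first thing to try is a case split. For $q\ge 17$ (or $q$ above some bound) we take $g=x^n-1$ with the trivial bound. For smaller $q$ we sieve the linear factors of $x^n-1$, of which there are at most $q-1$, contributing at most $(q-1)/q$ to the $\delta$ deficit; that is too much, so instead we use the pair $(2,(q-1)/2)$ of Lemma~\ref{L: 4.3 in Lemos}, giving $W(g)\le 2^{n/2+(q-1)/2}$ against $q^{n/2}$. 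Finally we check numerically that the worst case over all $(q,n,\omega)$ sits below the stated bound.
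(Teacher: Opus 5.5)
Your framework matches the paper's. You get $\omega\le 852$ from Proposition~\ref{P: 4.6 in Lemos}. You apply Proposition~\ref{Sieving result} with $g=x^n-1$, $s=0$, sieving the $r$ largest primes. You bound $W(x^n-1)$ via Lemma~\ref{L: 4.3 in Lemos} with a case split in $q$, so that the criterion becomes $q^n\ge\bigl(4(2+\frac{4r-1}{\delta_\omega})2^{4(\omega-r)+b}\bigr)^{2a/(a-\log_c 4)}$, and you compare this with the primorial for each $\omega$. No iteration is needed: one pass over $47\le\omega\le 852$, plus monotonicity of this bound in $\omega$, finishes the argument.

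The gap is that the concrete case split you commit to cannot reach $1.50\times10^{82}$, and the constant is determined entirely by that choice. At $\omega=46$ the optimal $r$ is $28$ ($r=29$ makes $\delta<0$). Then $\delta\approx 0.0593$, and $\log_2$ of the base is about $84.87+b$.
\begin{itemize}
\item Using $(a,b)=(1,0)$ already from $q\ge 17$ gives exponent $2/(1-\log_{17}4)\approx 3.92$, hence a threshold of about $10^{100}$.
\item Using $(2,2)$ at $q=5$ gives exponent $4/(2-\log_5 4)\approx 3.51$, hence about $10^{92}$.
\end{itemize}
Since $\prod_{i\le 46}\hat p_i\approx 7.8\times10^{81}$, either choice fails the primorial comparison at $\omega=46$ and at a range of larger $\omega$. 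The final constant would therefore be far worse than claimed.

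The fix is the following split:
\begin{itemize}
\item $(1,0)$ only for $q\ge 41$; at $q=37$ it already gives about $10^{82.9}$.
\item $(2,\frac{q-1}{2})$ for $7\le q\le 37$.
\item The $a=3$ pair $(3,6)$ for $q=5$, with exponent $6/(3-\log_5 4)\approx 2.81$.
\end{itemize}
With these choices the worst case at $\omega=46$ is $q=7$, $(a,b)=(2,3)$. It gives $87.87\cdot 4/(2-\log_7 4)\approx 273$ bits, i.e.\ $\approx 1.496\times10^{82}$, which is exactly the stated constant. All $\omega\ge 47$ up to $852$ are then eliminated by the primorial. As a minor correction, $\omega_{\max}$ after Step 1 is $852$, not a few hundred.
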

\begin{proof}
    We will use the same notation as in Proposition \ref{Sieving result}. Since the the $N_4$-membership is guaranteed whenever $q^n\geq 3.31 \times 10^{2821}$ by Proposition \ref{P: 4.6 in Lemos} we may assume $q^n < 3.31\times 10^{2821}$. Considering the product of the first 853 primes, we have
    $$\prod_{i=1}^{853}\hat{p}_i \geq 2.183 \times 10^{2825}.$$
    In particular, it is sufficient to consider pairs $(q,n)$ for which $\omega \leq 852$.
    We apply Proposition \ref{Sieving result} with the  fixed choice of $g=x^n-1$ and bounding $W(x^n-1)$ via Lemma~\ref{L: 4.3 in Lemos}. Given $\delta'_\omega>0$, we have the criterion
    $$ q^{n} \geq 16\left(2+\frac{4r-1}{\delta'_\omega}\right)^2 4^{4(\omega-r)+\frac{n}{a} + b} \quad \text{implies} \quad (q,n)\in N_4. $$
    If $c\leq q$, then we have
    $$ 4^\frac{n}{a} =  \left(4^{\frac{\log c}{\log 4}\frac{\log 4}{\log c}}\right)^\frac{n}{a} = \left(4^{\frac{\log c}{\log 4}}\right)^{\frac{n}{a}\frac{\log 4}{\log c}} = c^{\frac{n}{a}\frac{\log 4}{\log c}} \leq q^{\frac{n \log_c 4 }{a}}. $$
    We conclude that
    $$ \frac{q^n}{4^{\frac{n}{a}}}\geq \frac{q^n}{q^{\frac{n \log_c 4 }{a}}} = q^{n\left(1-\frac{\log_c 4}{a}\right)}= q^{n\frac{a-\log_c 4}{a}} $$
    Thus, our criterion can be reformulated as
    $$ q^{n} \geq \left(4\left(2+\frac{4r-1}{\delta'_\omega}\right) 2^{4(\omega-r) + b}\right)^\frac{2a}{a-\log_c 4} \quad \text{implies} \quad (q,n)\in N_4. $$
    From Lemma \ref{L: 4.3 in Lemos}, we may choose $(a,b,c) \in A$ where
    \begin{equation}\label{Set A}
        A=\{(1,0,41), (3,6,5) \} \cup \left\{\left(2, \frac{q-1}{2},q \right) \middle| 7 \leq q \leq 37  \right\}.
    \end{equation}
    Since $r$ may be chosen freely, we minimise the resulting lower bound on $q^n$ over $r$. Consequently, it suffices to consider
    \begin{equation*}
    M'_\omega = \min_{0 \leq r \leq \omega}\max_{(a,b,c) \in A} \left\{ \left(4\left(2 + \frac{4r-1}{\delta'_{\omega}} \right)2^{4(\omega-r)+b}\right)^{\frac{2a}{a-\log_c 4}} \right\}.
    \end{equation*}
    For $\omega = 852$, we obtain $M'_{852}<1.231 \times 10^{628}$. Thus, it suffices to show that $q^n \geq M_{852}$, since this implies $(q,n) \in N_4$ However, we also have that 
    $$q^n - 1 \geq \prod_{i=1}^{852}\hat{p}_i > 3.308 \times 10^{2821}, $$
    by virtue of $\omega = 852$. Thus, $\omega\geq 852$ implies $(q,n) \in N_4$. We proceed in a similar fashion for each $47\leq \omega\leq 851$ and conclude
    $$ \omega\geq 47 \quad \text{implies} \quad (q,n) \in N_4.$$
    Note that $M'_\omega$ is monotonically increasing. If $\omega\leq 46$, then 
    $$ q^n \geq M'_{46}\geq M'_{\omega} \quad \text{implies} \quad (q,n) \in N_4.$$
    Since $M'_{46} < 1.50 \times 10^{82}$, this completes the proof. 
    \end{proof}

\section{Varying $g$}
From now on, we may assume that any $(q,n) \notin N_4$ satisfies $q^{n} \leq 1.50 \times 10^{82}.$ Since $q\geq 5$, we have $n\leq \log_5(1.50 \times 10^{82})<118$. Therefore, we may also assume that $n\in [2,117]$ and $\omega \leq 46$. Consider the case $n \geq 24$. In this range,
$$ q \leq (M'_{46})^{1/24} < 2655, $$
and thus Criterion~\eqref{Sieve criterion} can be checked directly for each admissible prime power $q$ and exponent $n \in [24,117]$. Applying Proposition \ref{Sieving result} with $g=1,$ we find that all such pairs $(q,n)$ belong to $N_4,$ except possibly those displayed in Table ~\ref{Tab: Possible rejections, n geq 24}. 
\begin{table}[h!]
\centering
\begin{tabular}{|p{0.5cm}|p{2cm}|}
\hline
$q$ & \multicolumn{1}{c|}{$n$} \\
\hline
$5$  & $24,48,72,96$ \\
$7$  & $24,48,72,96$ \\
$11$ & $40,60,80$ \\
\hline
\end{tabular}
\caption{Possible rejections when $n\geq 24$}
\label{Tab: Possible rejections, n geq 24}
\end{table}

However, by choosing $g>1$, we are able to rule out the remaining exceptional pairs $(q,n)$. Thus, if $(q,n) \notin N_4$, then $n\leq 23$. We proceed by splitting the remaining cases 
$$R := \{(q,n) \notin N_4: q^n \leq 1.5 \times 10^{82}, \ 2 \leq n\leq 23\}$$
in terms of the size of $q$. For a given positive integer $Q_0$, we define the sets
$$ R^- := \{(q,n) \in R : q\leq Q_0\} \quad \text{and} \quad R^+:= \{(q,n) \in R : q> Q_0\}.$$
If we let $Q_0 = 10^4$ and consider $(q,n) \in R^+$, then 
$$n \leq \frac{\log(1.50 \times 10^{82})}{\log(q)} < \frac{\log(1.50 \times 10^{82})}{\log(Q_0)} < 21, $$
So $n\leq n_{\max} =20$. Applying Proposition~\ref{Sieving result} with $\delta_\omega$, $g=1$ and $s=n_{\max},$ we have
\begin{equation*}
    M_\omega = \min_{0 \leq r \leq \omega}\max_{(a,b,c) \in A} \left\{ \left(4\left(2 + \frac{4r+s-1}{\delta_{\omega}} \right)2^{4(\omega-r)+b}\right)^{\frac{2a}{a-\log_c 4}} \right\} .
    \end{equation*}
where $A$ is defined in \eqref{Set A}. Comparing $M_{\omega}$ with the trivial lower bound
$$ 1 + \prod_{i=1}^\omega \hat{p}_i, $$
we are able to rule out any pairs $(q,n)$ with $\omega>23$. Thus $(q,n) \in R^+$ implies $\omega \leq 23$. Minimising over all admissible choices of $r \in [1,23]$, in the sense that $\delta_{23}>0$, then $(q,n) \in R^+$ only if $q^n \leq M_{23}= 1.22 \times 10^{33}$. Finally, we may recompute
$$n \leq \frac{\log(1.22 \times 10^{33})}{\log(Q_0)} < 9. $$
Therefore, we have that
$$ R^+ = \{(q,n) \notin N_4 : q > 10^4, \ 2 \leq n \leq 8, \ q^n \leq 1.22 \times 10^{33}, \ \omega \leq 23\}. $$

Condition \eqref{Sieve criterion} can be directly checked for all $(q,n)$ such that both $q^n \leq 1.5 \times 10^{82}$ and $q \leq 10^4$ with $n \in [2,23]$. It was found that $\lvert R^-\rvert = 609$ and these possible exceptional pairs are included in $E_4$ in the appendix. It remains to determine $R^+.$ A computational check shows that there are no possible rejections for $n \geq 6.$ Hence
$$ R^+ = \{ (q,n) \notin N_4 : q>10^4, n \leq 5, q^n \leq 1.22 \times 10^{33}, \omega \leq 23 \}. $$

\section{Ramified Bounds}
Instead of bounding $q^n$ trivially from below via the $\omega$-th primorial, we use two results from \cite{bagger2026hybrid}. The improvement comes from factoring $x^n-1$ over $\Z[x]$ into cyclotomic polynomials $\Phi_d(x)$ and considering which residue classes a divisor $p \mid \Phi_d(q)$ must lie in. The exponent $n$ plays an important role, depending on whether it divides $q^n-1$. Let $\mathbb{P}$ denote the set of rational primes. For $p \in \mathbb{P},$ define 
$$\Pi_{p}^0:= \{p' \in \mathbb{P} : p' \not\equiv 1 \mod p \} \quad \text{and} \quad \Pi_{p}^1:= \mathbb{P}\setminus \Pi_p^0$$
with $\omega=\omega(q^p-1)$. For any $k \in [1,\omega]$, define $R_k^0:= \prod_{i=1}^{k}r_{i,0}$ and $R_{1}^{k}:= \prod_{j=1}^{\omega-k}r_{{j,1}}$ where $r_{i,s}\in \Pi_{p}^{s}$ is the $i^{th}$ element under the natural ordering.

\begin{theorem}[{\cite[Proposition $3$]{bagger2026hybrid}}]\label{p-ramification  bound}
Let $p \in \bb{P}$ with $x \in \bb{Z}_{>1}$ and assume, without loss of generality, that $p = r_{k'} \in \Pi_p^0$. Let $I_{k'}^- = [0,k')$ and $ I_{k'}^+= [k',\omega)$. Then we have
\begin{align*}
x \geq & \ \text{min}\left\{ \text{max}\left\{ 
1+R^0_k, \left(1 + R^0_k R^1_k\right)^{1/p} \right\} \ : \ k \in I_{k'}^- \right\} & \\
 &  \bigcup \left\{ \text{max}\left\{ 
1+p^{-1}R^0_{k+1}, \left(1 + p^{-1}R^0_{k+1}R^1_k\right)^{1/p} \right\} \ : \ k \in I_{k'}^+ \right\} & \ \ \text{for }  p \nmid x^p-1 \ \\
x \geq & \ \text{min} \left\{ \text{max}\left\{ 
1+p R^0_{k-1}, \left(1 + p^2 R^0_{k-1} R^1_k \right)^{1/p} \right\} \ : \ k \in I_{k'}^- \right\} & \\
&  \bigcup \left\{ \text{max}\left\{ 
1+R^0_k, \left(1 + p R^0_k R^1_k\right)^{1/p} \right\}  \ : \ k \in I_{k'}^+ \right\} & \ \ \text{for } p \mid x^p-1.  
\end{align*} 
\end{theorem}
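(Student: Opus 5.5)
We split according to the factorisation $x^p-1=(x-1)\Phi_p(x)$ over $\Z$, where $\omega=\omega(x^p-1)$, and use the standard description of the primes dividing $\Phi_p(x)$. If a prime $\ell$ divides $\Phi_p(x)$ and $\ell\neq p$, then $\ord_\ell(x)=p$, so $\ell\equiv 1 \pmod p$, that is, $\ell\in\Pi_p^1$. The prime $p$ itself divides $\Phi_p(x)$ exactly when $x\equiv 1\pmod p$, and then $p\,\|\,\Phi_p(x)$, since $\Phi_p(x)\equiv p \pmod{p^2}$ when $x\equiv 1\pmod p$ and $p$ is odd. A prime dividing both $x-1$ and $\Phi_p(x)$ must be $p$, because $\Phi_p(x)\equiv p\pmod{x-1}$. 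Consequently every prime factor of $x^p-1$ lying in $\Pi_p^0\setminus\{p\}$ divides $x-1$. For $p\mid x^p-1$, Fermat gives $x^p\equiv x$, so $p\mid x^p-1$ if and only if $p\mid x-1$, and in that case $p$ divides both factors.

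We then introduce $k$, the number of distinct prime factors of $x^p-1$ lying in $\Pi_p^0$ (counting $p$ appropriately). The remaining $\omega-k$ prime factors lie in $\Pi_p^1$. We take the minimum over all possible $k$ because $k$ is unknown, and we bound $x$ from below in two ways.

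The first bound is linear. All primes of $\Pi_p^0$ dividing $x^p-1$, other than $p$, divide $x-1$. These are $k$ distinct primes of $\Pi_p^0$, so their product is at least $R_k^0$, the product of the $k$ smallest elements of $\Pi_p^0$.

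The second bound is the $p$-th power bound. We have $x^p-1\ge \prod_{\ell\mid x^p-1}\ell$, and this product is at least the $\Pi_p^0$-part times the $\Pi_p^1$-part, so it is at least $R_k^0R_k^1$. This gives $x\ge(1+R_k^0R_k^1)^{1/p}$.

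The cases differ only in how $p$ enters, and this bookkeeping is where the $p^{\pm1}$ and $k\pm1$ shifts arise.

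\textbf{Case $p\nmid x^p-1$.} Here $p$ is not among the divisors. When $k<k'$, the $k$ smallest elements of $\Pi_p^0$ exclude $p=r_{k'}$, so the bound $R_k^0$ is fine. When $k\ge k'$, the minimal $k$-subset of $\Pi_p^0\setminus\{p\}$ has product $R^0_{k+1}/p$, which gives the terms $p^{-1}R^0_{k+1}$.

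\textbf{Case $p\mid x^p-1$.} Here $p\mid x-1$ and also $p\mid\Phi_p(x)$, so $p^2\mid x^p-1$. When $k$ counts $p$ and $p$ is not among the first $k$ elements of $\Pi_p^0$ (the range $k<k'$), the other $k-1$ primes have product at least $R^0_{k-1}$. This gives $x-1\ge pR^0_{k-1}$ and $x^p-1\ge p^2R^0_{k-1}R^1_k$. When $p$ is among them ($k\ge k'$), the product is at least $R_k^0$, which already contains $p$ once, and the extra factor of $p$ from $\Phi_p(x)$ gives $pR_k^0R_k^1$. Since both bounds hold simultaneously for the true $k$, $x$ is at least their maximum, and hence at least the minimum over $k$ of these maxima.

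The main obstacle is the indexing around $p=r_{k'}$, namely ensuring that the minimal products in each range of $k$ are exactly the stated $R$'s. I would verify this by an exchange argument: replacing any prime in the subset by a smaller admissible one does not increase the product.
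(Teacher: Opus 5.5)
Your route is the natural one (the paper only quotes this result, so there is no in-text proof to compare with), and the bookkeeping is right. Primes of $\Pi_p^0\setminus\{p\}$ dividing $x^p-1$ must divide $x-1$. Relative to the position $p=r_{k'}$, the minimal products of admissible subsets of $\Pi_p^0$ are exactly $R^0_k$, $p^{-1}R^0_{k+1}$, $pR^0_{k-1}$ and $R^0_k$. In the second case, $p^2\mid x^p-1$ supplies the extra powers of $p$.

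One step is missing, however. The minima in the statement run only over $k\in I_{k'}^-\cup I_{k'}^+=[0,\omega)$. So you must show that the true $k$ is never $\omega$, i.e.\ that $x^p-1$ always has at least one prime factor in $\Pi_p^1$. Your write-up allows $\omega-k=0$ and never excludes it.

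The fix uses the fact $p\,\|\,\Phi_p(x)$, which you proved but did not use:
\begin{itemize}
\item If $p\nmid x^p-1$, then $\Phi_p(x)>1$ and none of its prime factors is $p$, so each lies in $\Pi_p^1$.
\item If $p\mid x^p-1$ and $p$ is odd, then $v_p(\Phi_p(x))=1$ while $\Phi_p(x)\ge 2^p-1>p$. Hence $\Phi_p(x)$ has a prime factor $\ell\neq p$, and $\ell\in\Pi_p^1$.
\end{itemize}
Oddness is genuinely needed here. For $p=2$ and $x=3$ one has $x^2-1=8$ and $k=\omega=1$. So the statement has to be read for odd $p$, which is how the paper applies it ($p=3,5$).
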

This bound is used whenever the exponent $n$ is prime. Since a priori, we do not know if $p$ divides $x^p-1$, we take the minimum of the two bounds. When $n$ is a power of $2$, we also obtain the following improvement over the trivial lower bound.
\begin{corollary}\label{Hybrid bound for n=4}
Let $q$ be an odd prime power, $a \in \Z^+$ and $n=2^a$. Then
$$ q \geq \left(1+ 2n \prod_{i=1}^{\omega}p_i \right)^{\frac{1}{n}}.$$
\end{corollary}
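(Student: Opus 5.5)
We factor $q^n-1=\prod_{j=0}^{a}\Phi_{2^j}(q)=(q-1)(q+1)\prod_{j=2}^{a}(q^{2^{j-1}}+1)$. The aim is to show that, among the $\omega=\omega(q^n-1)$ distinct primes $p_1<\dots<p_\omega$ of this product (here the $p_i$ are taken to be the first $\omega$ rational primes, so the bound is on $q^n$ in terms of $\omega$), the odd primes lying in the higher cyclotomic factors are forced into restricted residue classes. These restrictions give extra multiplicative room compared with the primorial.

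First, for $j\ge 2$, every odd prime $\ell\mid\Phi_{2^j}(q)=q^{2^{j-1}}+1$ has $\ord_\ell(q)=2^j$, so $\ell\equiv 1\pmod{2^j}$. Also, $q^{2^{j-1}}+1\equiv 2\pmod 4$, so each such factor contributes only a single factor of $2$. The factors $\Phi_{2^j}(q)$ for distinct $j$ are pairwise coprime apart from the common factor $2$. Meanwhile $(q-1)(q+1)$ is divisible by $8$, since $q$ is odd. The key $2$-adic fact is
$$v_2(q^n-1)=v_2(q-1)+v_2(q+1)+(a-1)\ \ge\ a+2,$$
so $2^{a+2}=4n\mid q^n-1$.

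The estimate then runs as follows. Write $q^n-1=2^{v}\prod_{\ell \text{ odd}}\ell^{e_\ell}$. The odd part is at least the product of its $\omega-1$ distinct odd primes, which is at least $\prod_{i=2}^{\omega}\hat p_i$. Together with $2^{v}\ge 4n=2\cdot 2n$, this gives
$$q^n-1\ \ge\ 4n\prod_{i=2}^{\omega}\hat p_i=2n\prod_{i=1}^{\omega}\hat p_i .$$
Taking $n$-th roots yields the corollary, with $p_i$ read as the $i$-th prime, as in the trivial bound it improves. If instead the $p_i$ are the actual primes of $q^n-1$, the same argument gives $q^n-1\ge 2^{v}\prod_{\text{odd}}p_i\ge 2n\prod p_i$, which is even cleaner: just use $v\ge a+2$ and $e_\ell\ge1$.

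The only delicate point is the $2$-adic valuation. I would prove $v_2(q^{2^a}-1)=v_2(q^2-1)+a-1$ by induction, using $q^{2^{j}}-1=(q^{2^{j-1}}-1)(q^{2^{j-1}}+1)$, where the second factor is $\equiv 2\pmod 4$. Combined with $8\mid q^2-1$, this gives $v\ge a+2$. The residue-class information $\ell\equiv1\pmod{2^j}$ is not needed for this weak form, but it could be used to strengthen the bound in the manner of Theorem~\ref{p-ramification  bound}.
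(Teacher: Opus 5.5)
Your argument is correct, and it takes a different route from the paper, which gives no proof of its own: it quotes the statement as Theorem~3 of \cite{bagger2026hybrid}, specialised to $q$ odd and $n=2,4$. That theorem belongs to the ``ramified bound'' machinery built from the cyclotomic factorisation $x^n-1=\prod_{d\mid n}\Phi_d(x)$ and the residue classes forced on prime divisors of $\Phi_d(q)$.

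You instead give a short self-contained proof valid for every $a\ge 1$. Its only real input is $v_2(q^{2^a}-1)=v_2(q^2-1)+a-1\ge a+2$, so $4n\mid q^n-1$. Comparing the $\omega-1$ odd prime divisors with $\hat p_2,\dots,\hat p_\omega$ then gives
\[
q^n-1\ge 4n\prod_{i=2}^{\omega}\hat p_i=2n\prod_{i=1}^{\omega}\hat p_i .
\]
This reproduces exactly the numbers the paper uses. For $n=4$ and $\omega=23$ it gives $8\prod_{i\le 23}\hat p_i\approx 2.136\times 10^{33}$, and it likewise matches the cases $\omega=21,22$ and the lower endpoints in Table~\ref{Case n4 k leq 18}. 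So in this corollary only the extra power of $2$ is being exploited.

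What the general cited theorem offers beyond your route is the option of also using the congruences $\ell\equiv 1\pmod{2^j}$ for odd $\ell\mid\Phi_{2^j}(q)$, in the style of Theorem~\ref{p-ramification  bound}, to sharpen the bound. As you note, that information is not needed for the stated form.
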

This is~\cite[Theorem 3]{bagger2026hybrid}, specialised to the case where $q$ odd and $n=2$ or $4$. We will handle the remaining cases $n\leq 5$ individually and refer to the bounds given in Theorem~\ref{p-ramification  bound} and Corollary~\ref{Hybrid bound for n=4} collectively as \emph{ramified bounds}. Our method is stratified by $\omega=k$ and, whenever the search space becomes computationally manageable, we evaluate membership $(q,n) \in N_4$ directly. We proceed as follows.
 \begin{enumerate}
     \item Compare the sieving condition in Proposition~\ref{Sieving result} with the appropriate bound from Theorem~\ref{p-ramification  bound} or Corollary~\ref{Hybrid bound for n=4}.
     \item Employ a prime divisor tree for small primes dividing $q^n-1$.
     \item At each branch of the divisor tree, verify directly if any prime powers $q$ satisfy the divisibility conditions whilst $\omega = k$.
 \end{enumerate}
For each case, we may assume $\omega \leq 23$.
\subsection{Case n=5}\label{Case n=5}
Our search space is given by
$$10^{20} < q^5 \leq 1.22 \times 10^{33}, $$
and we fix $s=5$. Applying Theorem \ref{p-ramification  bound}, we obtain $q^5 \geq 4.122 \times 10^{38}$ when $\omega=23$. Since $M_{23}=7.189 \times 10^{32} < 4.122 \times 10^{38}$, we conclude that $(q,5) \in N_4$. The same comparison is carried out for the cases $\omega \in [17,22]$. The corresponding values of $M_\omega$ and the ramified lower bounds are given in Table \ref{M_omega for omega in [17,23] in n=5 case}.
\begin{table}[htbp]
\begin{tabular}{|
>{\centering\arraybackslash}m{1.5cm}|
>{\centering\arraybackslash}m{2cm}|
>{\centering\arraybackslash}m{3.5cm}|
>{\centering\arraybackslash}m{3.5cm}|}
    \hline
    $\omega$ & Optimal $r$ & $M_\omega $ when $s=5$ &  Ramified bound \\
    \hline
    $23$ &  $12$ & $7.189 \times 10^{32}$  & $4.122 \times 10^{38}$\\
    \hline
    $22$ &  $12$ & $1.654 \times 10^{31}$ & $3.431 \times 10^{36}$ \\
    \hline
    $21$ & $11$ & $3.928 \times 10^{30}$  & $5.868 \times 10^{34}$ \\
    \hline
    $20$ & $11$  & $3.065 \times 10^{29}$  & $2.354 \times 10^{32}$ \\
    \hline
    $19$ & $10$ & $2.292 \times 10^{28}$ & $9.377 \times 10^{29}$ \\
    \hline
     $18$ & $9$ & $6.264 \times 10^{27}$ & $3.751 \times 10^{28}$ \\
     \hline
    $\mathbf{17}$ & $\mathbf 9$ & $\mathbf{4.758 \times 10^{26}}$ & $\mathbf{1.706 \times 10^{26}}$ \\
      \hline
\end{tabular}
\caption{Comparison of $M_\omega$ and ramified lower bound for the cases $\omega \in [17,23]$.}
\label{M_omega for omega in [17,23] in n=5 case}
\end{table}
 
For each $\omega \geq 18,$ the ramified bound exceeds $M_\omega,$ and hence these cases are eliminated. It remains to consider the range $q^5 \leq M_{17}=4.758 \times 10^{26}.$ This range can be checked computationally. We find that every admissible $q$ in this range satisfies the sieve condition in Proposition \ref{Sieving result} and hence $(q,5) \in N_4$.
\subsection{Case $n=4$}
Our search space is given by
$$10^{16} < q^4 \leq 1.22 \times 10^{33}, $$
and we fix $s=4$. We apply Corollary \ref{Hybrid bound for n=4} and compare it with $M_\omega$ for $\omega \leq 23$. This allows us to eliminate the cases $\omega=22$ and $23$, as shown in Table \ref{Cases omega in [21,23] for n=4 case}.

\begin{table}[htbp]
\begin{tabular}{|
>{\centering\arraybackslash}m{1.5cm}|
>{\centering\arraybackslash}m{2cm}|
>{\centering\arraybackslash}m{3.5cm}|
>{\centering\arraybackslash}m{3.5cm}|}
    \hline
    $\omega$ & Optimal $r$ & $M_{\omega} $ when $s=4$ &  Ramified bound \\
    \hline
    $23$ &  $12$ & $6.906 \times 10^{32}$  & $2.136 \times 10^{33}$\\
    \hline
    $22$ &  $12$ & $1.586 \times 10^{31}$ & $2.574 \times 10^{31}$ \\
    \hline
    $\mathbf{21}$ & $\mathbf 11$ & $\mathbf{3.760 \times 10^{30}} $   & $\mathbf{3.258 \times 10^{29}}$ \\
    \hline
\end{tabular}
\caption{Comparison of $M_\omega$ and ramified lower bound for the cases $\omega \in [21,23].$}
\label{Cases omega in [21,23] for n=4 case}
\end{table}

We are left to determine whether $(q,4) \in N_4$ for $q^4 \leq M_{21}= 3.760 \times 10^{30}$ and $q>10^4,$ with $\omega \leq 21.$ 
We employ a variant of the prime divisor tree method used in
\cite{Trudgian and Jarso}. For each fixed $k$, we divide the problem into
subcases, according to whether selected small primes divide $q^4-1$. For a given (small) prime $p$, if $p$ divides $q^4-1$, then this imposes a restriction on the values $q^n-1$. If instead $p$ does not divide $q^4-1$, then $q^4-1$ must be large, making it more easily detectible by our sieve.

Consider the case $\omega=21$. Let $M_{\omega}^{a,b'}$ for some primes $a,b$ denote the value of  $M_\omega$ in the branch in which $a \mid q^4-1$  and $b \nmid q^4-1.$ Suppose first  that $3 \nmid q^4-1.$ In this case, we obtain  $M_{21}^{3'}=6.193 \times 10^{28}$ while the ramified bound gives $q^4 \geq 8.580 \times 10^{30}.$ Since, the latter exceeds $M_{21}^{3'}$, we can conclude that $(q,4) \in N_4$ throughout this case.

We next consider the case $3 \mid q^4-1.$ We further divide this branch according to whether $5$ divides $q^4-1.$ Suppose that $5 \nmid q^4-1$. Then $M_{21}^{3,5'}= M_{21}^{3'}$, since in the $\omega=21$ case, the optimal choice of $(x,r)$ minimising the quantity $4\Delta W(g)W(e)^4$ is $(10,11)$. Consequently, $M_{21}$ remains unchanged in all subcases in which one of the primes $\{p_1,\ldots, p_{10}\}$ does not divide $q^4-1$. We also obtain $q^4 \geq 5.148 \times 10^{30}$ using Corollary \ref{Hybrid bound for n=4}. Since $M_{21}^{3,5'}$ is smaller than this ramified lower bound, we again conclude that $(q,4) \in N_4$ throughout this subcase.

We are now left to consider the subcase in which both $3$ and $5$ divide $q^4-1.$ The process can be continued by further subdividing each branch according the divisibility of $q^4-1$ by subsequent primes. If we have made $l$ such divisibility conditions, involving the first $l$ primes other than $2,$ we say that we have descended to level $l$ of the prime divisor tree. We continue this process until reaching a subcase in which $M_{\omega}$ exceeds the ramified lower bound for $q^4$, summarised in Table \ref{Case n4 prime divisor tree method}.

\begin{table}[!htbp] 
\centering
\begin{tabular}
{|
>{\centering\arraybackslash}m{1cm}|
>{\centering\arraybackslash}m{1.5cm}|
>{\centering\arraybackslash}m{4cm}|
>{\centering\arraybackslash}m{4cm}|}
    \hline
    $\omega$ & Tree level & Interval of $q^4$ & \#$q$ satisfying conditions \\
    \hline
    $21$ & $11$  & $[3.258 \times 10^{29}, 3.760 \times 10^{30}]$ & $0$  \\ 
    \hline
    $20$ &  $7$ & $[4.463 \times 10^{27}, 2.915 \times 10^{29}]$& $0$  \\
    \hline
    $19$ &  $1$ & $[6.286 \times 10^{25}, 2.184 \times 10^{28}]$ & $0$ \\
    \hline
\end{tabular}
\caption{Prime divisor tree for $\omega \in [19,21]$ when $n=4$.}
\label{Case n4 prime divisor tree method}
\end{table}

Indeed, if $\omega \in [19,21],$ there are no values of $q$ for which $q^4-1$ simultaneously satisfies the divisibility conditions imposed by the prime divisor tree and has exactly $\omega$ distinct prime factors. Therefore, we conclude that $(q,4) \in N_4$ for all $q$ satisfying $q>10^4$ with $\omega \geq 19.$ 

It remains to consider $q$ satisfying  $q^4< M_{18}=5.949 \times 10^{27}, q>10^4$ with $\omega \leq 18$. For each fixed $\omega \leq 18$, we use SageMath to generate all prime powers $q$ with $q^4$ in the interval between the ramified lower bound and $M_\omega$, and determine which of these have exactly $\omega$ distinct prime divisors in $q^4-1.$
For each such $q$, we test whether the corresponding sieve condition in Proposition \ref{Sieving result} is satisfied. The results are summarised in Table \ref{Case n4 k leq 18}.
 
\begin{table}[htbp] 
\centering
\begin{tabular}
{|
>{\centering\arraybackslash}p{1cm}|
p{4cm}|
>{\centering\arraybackslash}p{2cm}|
>{\centering\arraybackslash}p{2cm}|}
    \hline
    $\omega$ & Interval of $q^4$  &  \#$q$ to check & \#rejections \\
    \hline
    $18$ &  $[9.383\times 10^{23}, 5.949 \times 10^{27}]$ & $0$  & $0$\\
    \hline
    $17$ &  $[1.538 \times 10^{22}, 4.495 \times 10^{26}]$ & $0$ & $0$ \\
    \hline
    $16$ & $[2.607 \times 10^{20}, 3.698 \times 10^{25}]$ & $0$   & $0$ \\
    \hline
    $15$ & $[4.919 \times 10^{18}, 9.480 \times 10^{24}]$  & $0$ & $0$  \\
    \hline
    $14$ & $[1.046 \times 10^{17}, 4.550\times 10^{23}]$  & $12$   & $0$  \\
    \hline
    $13$ & $[10^{16}, 4.226 \times 10^{22} ]$  & $45$  & $0$  \\
    \hline
    $12$ &  $[10^{16},1.103 \times 10^{22} ]$  & $239$ & $3$ \\
    \hline
    $11$ & $[10^{16}, 1.906 \times 10^{20}]$ & $320$ & $4$  \\
    \hline
    $10$ & $[10^{16}, 2.921 \times 10^{19}]$  &  $706$ &  $2$ \\
    \hline
    $9$  & $[10^{16}, 1.238 \times 10^{18}]$   & $572$  & $0$ \\
    \hline
    $8$ &  $[10^{16}, 6.973\times 10^{16}]$ & $240$ & $0$ \\
    \hline
\end{tabular}
\caption{Cases $\omega \leq 18$ when $n=4$ }
\label{Case n4 k leq 18}
\end{table}
For $\omega \leq 7,$ we have $M_\omega<10^{16}$. Since $q>10^4$, we have $q^4 > 10^{16},$ and hence there is nothing to check in these cases. Combining this with the results in Table~\ref{Case n4 k leq 18}, we conclude that $(q,4) \in N_4$ for $ 10^{16}<q^4 \leq 1.22 \times 10^{33}$ except possibly when 
$$q \in \{ 10009, 11131,13859,15313,17863,20747,21013,23561,53087.\}$$

\subsection{Case n=3}
Our search space is given by
$$10^{12} < q^3 \leq 1.22 \times 10^{33}, $$
and we fix $s=3$. As in the previous cases, we apply Proposition \ref{Sieving result} with $g=1$ to obtain values of $M_{\omega}$, and compare them with the lower bound from Theorem \ref{p-ramification  bound}. This allows us to reduce the bound on $\omega$ to $21$. By applying another prime divisor tree argument, we eliminate the cases $\omega=20,21$ as shown in Table \ref{Prime divisor tree for omega in [20,21] for n=3}.

\begin{table}[htbp] 
\centering
\begin{tabular}
{|
>{\centering\arraybackslash}p{1cm}|
>{\centering\arraybackslash}p{1.5cm}|
>{\centering\arraybackslash}p{4cm}|
>{\centering\arraybackslash}p{4cm}|}
    \hline
    $\omega$ &  Tree level & Interval of $q^3$ & \#$q$ satisfying conditions \\
    \hline
    $21$ & $12$  & $[4.343 \times 10^{29}, 3.596 \times 10^{30}]$   & $0$    \\ 
    \hline
    $20$ &  $11$ & $[9.242 \times 10^{27}, 2.769 \times 10^{29}]$ & $0$  \\
    \hline
\end{tabular}
\caption{Prime divisor tree for $\omega \in [20,21]$ when $n=3.$}
\label{Prime divisor tree for omega in [20,21] for n=3}
\end{table}

For $\omega\leq19$, the application of the prime divisor tree requires a more detailed analysis. As before, each branch is successively subdivided according to the divisibility of $q^3-1$ by small primes. However, unlike the cases $\omega=20,21$, the resulting branches cannot be eliminated solely by comparing the ramified bound with $M_\omega$. Consequently, the sieve condition must be verified computationally over the remaining interval for $q^3$ associated with each branch. At this stage, these intervals are still too large for a complete verification using SageMath. We therefore continue to refine the problem using the prime divisor tree until the remaining intervals are sufficiently small for such computations. We illustrate this procedure for the case $\omega=19$. We have that $M_{19}=2.078 \times 10^{28}$ while the ramified bound gives $q^3 \geq 8.973 \times 10^{25}.$ We now divide into two cases.
\begin{enumerate}
\item \textbf{Case 1:} $3 \nmid (q^3 - 1)$ - In this case, both $M_{19}$ and ramified bound improve, since the prime $3$ can be replaced by $p_{20}$ in the corresponding estimates. We obtain $M_{19}^{3'}= 1.082 \times 10^{27}$ and $q^3 \geq 1.026 \times 10^{27}$. Thus, the two bounds do not eliminate this case, and it remains to consider $q^3 \in [1.026 \times 10^{27}, 1.082 \times 10^{27}]$ subject to  $3 \nmid (q^3 - 1)$, and $\omega = 19$. A SageMath computation shows that $434456$ prime powers $q$ with $q^3$ in this interval satisfy $3\nmid(q^3-1)$, but none satisfies $\omega=19$. Hence, this case is eliminated.

\item \textbf{Case 2:} $3 \mid (q^3 - 1)$ - We further divide according to whether $5$ divides $q^3 - 1$.

\begin{enumerate}
\item \textbf{Case 2.1:} $5 \nmid (q^3 - 1)$ - In this subcase, the ramified bound improves to $q^3 \geq 1.848 \times 10^{27}$, while $M_{19}^{3,5'}=M_{19}^{3'} = 1.082 \times 10^{27}$. Since $M_{19}^{3,5'}$ is less than the lower bound for $q^3$, this subcase is eliminated.

\item \textbf{Case 2.2:} $5 \mid (q^3 - 1)$. 
\begin{enumerate}
    \item \textbf{Case 2.2.1:} $7 \nmid (q^3-1)$ - In this subcase, the ramified bound improves to $q^3 \geq 1.320 \times 10^{27}$, while $M_{19}^{3,5,7'}=1.082 \times 10^{27}$ remains unchanged. Thus, the two bounds are incompatible, and this branch is ruled out.

    \item \textbf{Case 2.2.2:} $7 \mid (q^3-1)$ - Proceeding to level $4$ of the prime divisor tree by considering the subcase $11\nmid(q^3-1)$ does not yield a sufficient improvement in the bounds to rule out this branch. Therefore, we stop at level $3$ and directly check the corresponding sieve condition in Proposition \ref{Sieving result} for $q$ such that $q^3 \in [8.973 \times 10^{25}, 2.078 \times 10^{28}]$ subject to $3,5,7 \mid q^3-1$ and $\omega=19.$ A SageMath computation shows that there are $6825191$ prime powers $q$ in this range satisfying $3,5,7 \mid q^3-1,$ but none satisfies $\omega=19.$ Hence, this branch is ruled out.
\end{enumerate} 

\end{enumerate}

\end{enumerate}
We have therefore established that $(q,3)\in N_4$ for
$q^3\leq 1.22\times10^{33}$ and $q>10^4$ in the case $\omega=19$.

We treat the remaining cases $\omega\leq18$ in a similar manner, with the results summarised below. A key difference in these cases is that the branch
where $3$ divides but $5$ doesn't divide $q^3-1$ can no longer be eliminated using the bounds alone. Indeed, for each $\omega \leq 18,$ the corresponding sieve bound $M_{\omega}^{3,5'}$ exceeds the lower bound for $q^3$ in this branch. Consequently, for each remaining value of $\omega$, we must perform
computational checks in the relevant ranges arising from three branches of the prime divisor tree.

\begin{table}[!ht] 
\vspace{3pt}
\centering
\begin{tabular}{|
>{\centering\arraybackslash}p{1cm}|
>{\centering\arraybackslash}p{5cm}|
>{\centering\arraybackslash}p{4cm}|}
\hline
$\omega$
&
$[\text{Ramified bound},\, M_{\omega}^{3'}]$
&
Possible rejections
\\
\hline
$18$ & $[9.970\times 10^{24},\, 8.118\times 10^{25}]$ & $0$ \\
\hline
$17$ & $[1.337\times 10^{23},\, 2.204\times 10^{25}]$ & $0$ \\
\hline
$16$ & $[2.506\times 10^{21}, 1.657 \times 10^{24}]$  &  $0$  \\
\hline
$15$ & $[3.173 \times 10^{19}, 1.361 \times 10^{23}]$   & $0$   \\
\hline
$14$ & $[1.094 \times 10^{18}, 3.467 \times 10^{22}]$  & $0$ \\
\hline
$13$ & $[1.498 \times 10^{16}, 1.659 \times 10^{21}]$  & $0$ - checks: 1   \\
\hline
\end{tabular}
\caption{Branch 1}
\label{Branch 1 of prime divisor tree for omega<=18 when n=3}
\end{table}

\begin{table}[!ht] 
\vspace{3pt}
\centering
\begin{tabular}{|
>{\centering\arraybackslash}p{1cm}|
>{\centering\arraybackslash}p{5cm}|
>{\centering\arraybackslash}p{4cm}|}
\hline
$\omega$
&
$[\text{Ramified bound},\, M_\omega^{3,5'}]$
&
Possible rejections
\\
\hline
$18$ & $[1.794\times 10^{25},\, 8.118\times 10^{25}]$ & $0$ \\
\hline
$17$ & $[1.850\times 10^{23},\, 2.204\times 10^{25}]$ & $0$ \\
\hline
$16$ & $[4.512 \times 10^{21}, 1.657 \times 10^{24}]$  & $0$   \\
\hline
$15$ & $[5.712 \times 10^{19}, 1.361 \times 10^{23}]$  & $0$  \\
\hline
$14$ & $[7.824 \times 10^{17}, 3.467 \times 10^{22}]$  & $0$  \\
\hline
$13$ & $[2.698 \times 10^{16}, 1.659 \times 10^{21}]$ & $0$ - checks: 1  \\
\hline
\end{tabular}
\caption{Branch 2}
\end{table}

\begin{table}[!ht]
\vspace{3pt}
\centering
\begin{tabular}{|
>{\centering\arraybackslash}p{1cm}|
>{\centering\arraybackslash}p{5cm}|
>{\centering\arraybackslash}p{4cm}|}
\hline
$\omega$
&
$[\text{Ramified bound},\, M_\omega^{3,5}]$
&
Possible rejections
\\
\hline
$18$ & $[2.188 \times 10^{24},\, 5.642\times 10^{27}]$ & $0$ \\
\hline
$17$ & $[2.256 \times 10^{22},\, 4.241\times 10^{26}]$ & $0$ \\
\hline
$16$ & $[2.856 \times 10^{20}, 3.484 \times 10^{25}]$  &  $0$\\
\hline
$15$ & $[9.848  \times 10^{18}, 8.875 \times 10^{24}]$  &  $0$- checks: 1 \\
\hline
$14$ & $[1.349 \times 10^{17}, 4.245 \times 10^{23}]$ & $0$ - checks: 2 \\
\hline
$13$ & $[2.148 \times 10^{15}, 3.916 \times 10^{22}]$ & $0$ - checks: 44 \\
\hline
\end{tabular}
\caption{Branch 3}
\end{table}

It remains to consider the region $q^3 \leq M_{12}=1.010 \times 10^{22}, q> 10^4$ with $\omega \leq 12.$ At this point, the remaining range is sufficiently small to perform an exhaustive computation in SageMath. This computation yields $96$ possible rejections in this region, which are included in the set $E_4$ in Appendix.

\subsection{Case n=2}
Our search space is given by
$$10^{8} < q^2 \leq 1.22 \times 10^{33}, $$
and we fix $s=3$. Comparing Proposition \ref{Sieving result} with the ramified bound for $q^2$ given by Corollary \ref{Hybrid bound for n=4}, we find that $M_{\omega}$ is smaller than the ramified bound only when $\omega=23.$ Hence, we are left to consider
$$q^2 \leq M_{22}= 1.454 \times 10^{31} \text{ and } q>10^4.$$
We attempted to further reduce this region using the prime divisor tree method together with the other techniques employed earlier in the article. However, the resulting computation was prohibitively expensive with the available computational resources. We therefore leave this remaining finite region unresolved. This completes the analysis of all cases of $n$. Combining the results obtained for all the cases of $n,$ we obtain Theorem \ref{T: main classification result}.

\section{Longer progressions} \label{Section on general m}
We turn our attention to progression of length $m>4$. For smaller values of $m$, we are still able to replicate some of the methods employed elsewhere in the paper to prove Theorem~\ref{T: medium m}. However, once the progressions become long enough, computational techniques become infeasible. There, we use Robin's bound on $\omega(n)$ to prove the general result Theorem~\ref{General m theorem (m>=11)}.
\subsection{Progressions with $5 \leq m \leq 10$}
 Let $P_r$ denote the product of the first $r$ primes. For each fixed $m$, our aim is to establish a result of the form
 \begin{equation*}
    q^n\geq P_r \quad \text{implies} \quad (q,n)\in N_m
\end{equation*}
We then apply this to $5 \leq m \leq 10$, reducing the problem to a finite region. Then using a similar technique employed in the treatment of $m=4$ case along with a computational search, we prove Theorem \ref{T: medium m}. 

\begin{proof}[Proof of Theorem \ref{T: medium m}]
Let $k>2m$ be a positive integer, to be chosen later, and let $r$ be the smallest positive integer satisfying
\begin{equation}
    \frac{1}{k} \geq \frac{r\log 2}{\log P_r}.
    \label{k condition}
\end{equation}
Then, by Lemma \ref{L: 4.1 in Lemos}, whenever $q^n \geq P_r$, 
\begin{equation}
    W(q^n-1) \leq q^{\frac{n}{k}}.
    \label{W qn bound}
\end{equation}
Suppose henceforth that $q^n \geq P_r$. Applying Proposition \ref{Sieving result} with $g=x^n-1$ and $e=q^n-1$, and using Lemma \ref{L: 4.3 in Lemos} and \eqref{W qn bound}, we have
\begin{equation*} 
mW(x^n-1)W(q^n-1)^m \leq m\,2^{\frac{n}{a}+b}q^{\frac{mn}{k}}.
\end{equation*}
Consequently, it is sufficient to verify
\begin{equation*}
    \frac{q^{\frac{n(k-2m)}{2k}}}{2^{\frac{n}{a}}}
    \geq m2^b.
\end{equation*}
We first consider sufficiently large values of $q$. Let $c$ be a positive integer, and suppose that $q \geq c= 2^{\log_2 c}.$ Then we have that $2^{\frac{n}{a}} \leq q^{\frac{n \log_c 2}{a}}.$ Therefore, it is sufficient to verify
\begin{equation}
    \frac{k-2m}{2k}-\frac{\log_c(2)}{a}>0 \quad \text{implies} \quad (q^n)^{ \frac{k-2m}{2k}-\frac{\log_c 2}{a} } \geq m2^b.
    \label{Sieve condition to check for general m}
\end{equation}
This condition is equivalent to $ c>2^{\frac{2k}{a(k-2m)}}.$
We begin with the choice $(a,b)=(1,0)$ from Lemma \ref{L: 4.3 in Lemos}. Let $c_1$ be an integer satisfying $c_1>2^{\frac{2k}{k-2m}}$ and take $c=c_1.$ For $q\geq c_1$, inequality \eqref{Sieve condition to check for general m} becomes
\begin{equation*}
    q^n \geq m^{\frac{2k}{(k-2m)-2k\log_{c_1}(2)}} =:M_1.
    \label{M1 condition}
\end{equation*}
Thus, if $P_r\geq M_1$, then $q^n\geq P_r$ implies $(q,n) \in N_m$ for every $q \geq c_1$.  In the computations, we seek the smallest admissible value of $c_1$ for which the resulting bounds can be satisfied. This is advantageous because it minimises the finite range of values of $q$ that must be considered separately.

It remains to consider the finite range $q<c_1$. We handle this by using the remaining choices of $(a,b)$ in Lemma \ref{L: 4.3 in Lemos}. In this finite range, we may take $c=q$ for each fixed $q$. Taking logarithms in \eqref{Sieve condition to check for general m} and rearranging  gives
\begin{align}
  n & \geq \frac{2ak(\log_2 m + b)}{a(k-2m)\log_2 q - 2k} =f_{a,b}(q), \nonumber
  \end{align}
provided $q>2^{\frac{2k}{a(k-2m)}}.$ Since $n$ is an integer, it is therefore sufficient to require $n\geq \left\lceil f_{a,b}(q)\right\rceil$. Thus, for any finite range of admissible values of $q$, we may define 
\begin{equation*}
    M=\max_q q^{\left\lceil f_{a,b}(q)\right\rceil},
    \label{general M condition}
\end{equation*}
where the maximum is taken over that range. If $P_r\geq M$, then $q^n\geq P_r$ implies $(q,n)\in N_m$ throughout the corresponding range of $q$.

We now illustrate this procedure with the remaining choices of $(a,b).$ For each such choice, choose a positive integer $c_a$ such that $c_a > 2^{\frac{2k}{a(k-2m)}}.$ Now, consider the range $c_a \leq q < c_{a-1}.$ For $a \geq 2$, we then define
\begin{equation*}
    M_a := \max_{c_a\leq q<c_{a-1}} q^{\left\lceil f_{a,b}(q)\right\rceil}.
\end{equation*}
If $P_r \geq M_{a},$ then $q^n \geq P_r$ implies $(q,n) \in N_m$ throughout the range $c_a \leq q<c_{a-1}.$ In our computations, the values $c_a$ are selected to be as small as possible, subject to the resulting bounds being attainable.

For a fixed $m$, we continue this process until the lower endpoint of the remaining range is at most $m$, since we are only interested in prime powers $q$ satisfying $\operatorname{char}(\mathbb{F}_q)\geq m$ and hence $q\geq m$. Thus, for a given $m$, it remains to choose $k$ and the corresponding $r$ satisfying \eqref{k condition} such that
\begin{equation}
    P_r\geq \max\{M_1,M_2,\ldots\}.
    \label{final Pr condition}
\end{equation}
A computer search determines suitable values of $k$ and $r$. Once \eqref{final Pr condition} is satisfied, we obtain that $q^n\geq P_r \quad \text{implies} \quad (q,n)\in N_m.$

\vspace{0.2cm}

The above procedure can be carried out for any fixed $m$ to find suitable choices of $k$ and $r$, reducing the problem to the finite region $q^n <P_r$ and hence to considering only $\omega\leq r.$ The remaining finite region can then be treated using methods similar to those employed earlier in the article.

Recall that, in the proof of Proposition \ref{Reduction of region to 10^82}, we applied Proposition \ref{Sieving result} with $g=x^n-1$ and considered each fixed value of $\omega$ separately. This allowed us to reduce the upper bound on $\omega$ from $852$ to $46$. More precisely, after restricting to $\omega \leq 46,$ we reduced the range that remains to be considered to $q^n < M_{46}.$ We apply the same procedure for $5\leq m\leq 10$, further reducing the range that remains to be considered and proving Theorem~\ref{T: medium m}. The resulting bounds are summarised in Table \ref{Results for general m}.

\begin{table}[!ht] 
\centering
\begin{tabular}{|
>{\centering\arraybackslash}m{1cm}|
>{\centering\arraybackslash}m{3cm}|
>{\centering\arraybackslash}m{3cm}|
>{\centering\arraybackslash}m{3cm}|}
    \hline
    $m$ & Choice of $(k,r)$  & $\omega(q^n-1)\leq u$ & $M_u$ \\\hline
    $4$ &  $(11,852)$   & $46$ & $1.50 \times 10^{82}$  \\ \hline
    $5$ &   $(13,2828)$    & $82$ & $8.591 \times 10^{171} $ \\ \hline
    $6$ & $(15,9632)$  &  $157$ & $1.269 \times 10^{384}$ \\ \hline
    $7$ & $(17,33501)$  & $319$  & $6.744 \times 10^{895}$ \\ \hline
    $8$ & $(19, 118465)$  &  $665$ & $1.420 \times 10^{2121}$\\ \hline
    $9$ & $(21, 424558)$  & $1424$ & $8.430 \times 10^{5086}$\\\hline
    $10$ & $(23, 664579)$ & $3090$ & $3.829 \times 10^{12228}$\\\hline
 \end{tabular}
 \caption{Bounds obtained for $4\leq m\leq 10$.}
 \label{Results for general m}
\end{table}
\end{proof}
The table also illustrates the increasing computational cost of this
approach. For example, when $m=10$, the bound on $\omega$ is
reduced from $664579$ to $3090$. However, obtaining this reduction
requires checking a large number of possible values of
$\omega$, and the number of cases that must be examined grows
rapidly with $m$. Thus, while the procedure gives substantially
improved bounds for each fixed $m$, carrying it out becomes
increasingly computationally demanding as $m$ grows.

 \subsection{Progressions with $m  \geq 11$}
Using Lemma~\ref{L: 4.1 in Lemos} to obtain an upper bound for $W(q^n-1)$, one can show the following.

\begin{proposition}[{\cite[Proposition 4.7]{lemos-neumann-ribas-2023}}]\label{Lemos bound for general m case}
Let $q$ be an odd prime power and $n,m$ be positive integers with $m \geq 5$ and $n \geq 2.$ Then
$$ q >   \exp{(17^{m+2})} \quad \text{implies} \quad (q,n) \in N_m. $$
\end{proposition}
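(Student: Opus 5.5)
This is Proposition 4.7 of \cite{lemos-neumann-ribas-2023}, cited there, but I would reprove it from Proposition~\ref{Sieving result} with the full choices $g=x^n-1$ and $e=q^n-1$. Then $r=s=0$ and $\delta=1$, so the criterion reads
$$q^{n/2}\geq m\Bigl(2-\tfrac{1}{1}\Bigr)W(x^n-1)W(q^n-1)^m = m\,W(x^n-1)\,W(q^n-1)^m,$$
where $m(2+(mr+s-1)/\delta)=m$ when $r=s=0$. It therefore suffices to show $q^{n/2}\ge m\,W(x^n-1)W(q^n-1)^m$ whenever $q>\exp(17^{m+2})$.

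First bound the normal part. By Lemma~\ref{L: 4.3 in Lemos} with $(a,b)=(1,0)$, $W(x^n-1)\le 2^n$. Since $q$ is enormous, $2^n\le q^{n\epsilon}$ with $\epsilon=\log 2/\log q$, which is negligible.

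Next bound the multiplicative part. Apply Lemma~\ref{L: 4.1 in Lemos} with $r$ chosen so that $t=r\log 2/\log P_r\le 1/(4m)$, which gives $W(q^n-1)\le (q^n)^{1/(4m)}$ once $q^n\ge P_r$. We have $\log P_r=\vartheta(\hat p_r)\approx r\log r$, so $t\approx \log 2/\log r$, and we need $\log r\gtrsim 4m\log 2$, that is, $r\approx 16^m$ up to constants. Then $\log P_r\approx r\log r\approx 16^m\cdot m\log 16$, which is less than $17^{m+2}$ for all $m\ge5$. This explains the base $17$: the extra factor absorbs $m\log 16$, the passage from $16$ to $17$ absorbs the $\log$, and the $+2$ covers small-$m$ slack. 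Hence $q>\exp(17^{m+2})$ gives $q^n\ge q>P_r$, and the lemma applies, so $W(q^n-1)^m\le q^{n/4}$.

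Combining, the requirement becomes $q^{n/2}\ge m\,2^n q^{n/4}$, that is, $q^{n/4}\ge m2^n$. This holds trivially since $q\ge 2^{8}$ and $q^{n/8}\ge m$ (here $q^{1/8}\ge\exp(17^{m+2}/8)>m$).

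The main obstacle is making the choice of $r$ explicit with rigorous prime estimates. One needs a lower bound $\log P_r\ge r\log r\cdot c$ (e.g.\ Rosser--Schoenfeld $\vartheta(p_r)\ge r(\log r+\log\log r-1)$ for $r\ge2$) to certify $t\le1/(4m)$, and an upper bound $\vartheta(p_r)\le r(\log r+\log\log r)$ to certify $P_r<\exp(17^{m+2})$. I would take $r=\lceil 17^{m}\rceil$ and check both inequalities directly, verifying $m=5$ numerically if the asymptotic constants are tight there.
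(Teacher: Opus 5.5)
Your reduction is the one the paper points to for this cited result, and it is the scheme the paper itself runs in the proof of Theorem~\ref{T: medium m} with $k=4m$. You take $e=q^n-1$ and $g=x^n-1$ in Proposition~\ref{Sieving result}, so $r=s=0$, $\delta=1$, and the criterion is $q^{n/2}\ge mW(x^n-1)W(q^n-1)^m$. Then $W(x^n-1)\le 2^n$ by Lemma~\ref{L: 4.3 in Lemos}, and Lemma~\ref{L: 4.1 in Lemos} with exponent $t\le 1/(4m)$ gives $W(q^n-1)^m\le q^{n/4}$. What remains is the trivial inequality $q^{n/4}\ge m2^n$. That part is fine, and your heuristic $r\approx 16^m$ is also right.

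The gap is the explicit choice $r=\lceil 17^m\rceil$ you commit to at the end. Your argument needs two things at once:
\begin{itemize}
\item $\log P_r\ge 4mr\log 2$, which gives $t\le 1/(4m)$;
\item $P_r<q$, which the hypothesis $q>\exp(17^{m+2})$ only guarantees if $\log P_r=\vartheta(\hat p_r)\le 17^{m+2}=289\cdot 17^m$.
\end{itemize}
With $r=17^m$, Robin's bound gives
$$\vartheta(\hat p_r)\ge 17^m\bigl(m\log 17+\log(m\log 17)-1.0073\bigr),$$
which exceeds $289\cdot 17^m$ for every $m\ge 101$. Using $q^n\ge q^2$ instead of $q$ only postpones the failure to $m\ge 203$. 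So the check you propose fails, and it fails at large $m$, not at $m=5$ as you expect. The extra factor $\log r\approx m\log 17$ in $\log P_r\approx r\log r$ cannot be absorbed by the constant $17^2$. It can only be absorbed by the exponential gap between $16^m$ and $17^m$, which you give away by taking $r=17^m$.

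The fix is to take $r=16^m$.
\begin{itemize}
\item Since $\log\log r>1.0073$, Robin's bound gives $\vartheta(\hat p_r)>r\log r=4mr\log 2$, hence $t<1/(4m)$.
\item Rosser--Schoenfeld gives $\vartheta(\hat p_r)<\hat p_r<16^m\bigl(m\log 16+\log(m\log 16)\bigr)$. This is below $17^{m+2}$ for all $m\ge 5$, because $(16/17)^m\bigl(m\log 16+\log(m\log 16)\bigr)<20<289$.
\end{itemize}
This yields $P_r<q$, so $P_r\le q^n-1$, and the rest of your argument goes through.

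A minor point: the lower bound $\vartheta(\hat p_r)\ge r(\log r+\log\log r-1)$ for all $r\ge 2$ is not the standard statement. Robin's version has constant $1.0072629$ for $r\ge 2$, and Dusart's has $1$ for $r\ge 198$. This is harmless here since $r$ is huge.
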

We instead estimate this quantity using the following result of Robin~\cite{robin1983}.

  \begin{lemma} \label{Robin's bound}
  For $n  \geq 3,$
   \begin{equation*}
        \omega(n)  \leq  \frac{1.3841  \log n}{ \log \log n}.
   \end{equation*}
  \end{lemma}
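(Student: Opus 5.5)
We would prove the bound by reducing it to primorials and then using explicit estimates for the Chebyshev function $\theta(x)=\sum_{p\le x}\log p$, which is Robin's approach in~\cite{robin1983}. Write $f(x)=\log x/\log\log x$. Since $f'(x)=(\log\log x-1)/(x(\log\log x)^2)$, $f$ is increasing for $x\ge e^e\approx 15.2$.

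\textbf{Reduction to primorials.} Let $k=\omega(n)$ and let $P_k$ be the product of the first $k$ primes. Then $n\ge P_k$. For $k\ge 3$ we have $P_k\ge 30>e^e$, so monotonicity of $f$ gives
$$\frac{\omega(n)}{f(n)}\le \frac{k}{f(P_k)}.$$
The cases $k\le 2$ are checked directly. For $k=1$ we need $1\le 1.3841 f(n)$ for every $n\ge3$, which holds since $f$ is bounded below by its minimum value $e$ on $[3,\infty)$. For $k=2$ we have $n\ge 6$; the values $6\le n\le 15$ are finitely many and checked by hand, and for larger $n$ monotonicity applies. It therefore suffices to show that
$$g(k):=\frac{k\,\log\theta(p_k)}{\theta(p_k)}\le 1.3841 \quad\text{for all } k\ge 3,$$
using $\log P_k=\theta(p_k)$.

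\textbf{Large $k$.} We would use an explicit lower bound for $\theta(p_k)$, such as Robin's estimate $\theta(p_k)\ge k(\log k+\log\log k-1.0072629)$ for $k\ge2$, or the Rosser--Schoenfeld bounds. Note that $t\mapsto \log t/t$ is decreasing for $t\ge e$. So a lower bound $\theta(p_k)\ge L(k)$ yields
$$g(k)\le \frac{\log L(k)}{\log k+\log\log k-1.0073}.$$
The right-hand side tends to $1$, because $\log L(k)=\log k+\log\log k+o(1)$. We would then make this explicit: find $K$ such that the right-hand side is below $1.3841$ for all $k\ge K$. A crude bound should already give a moderate $K$, perhaps of the order of a few hundred or thousand.

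\textbf{Small $k$ and main obstacle.} For $3\le k<K$ we compute $g(k)$ exactly from the primorials. The maximum is expected at $k=9$, where $P_9=223092870$ and $g(9)\approx 1.38402$. This explains the constant $1.3841$. The main obstacle is the sharpness of this constant. The margin at $k=9$ is tiny, so the numerical evaluation there needs care. We must also ensure that the explicit $\theta$ estimate is strong enough that $K$ lies inside a range we can compute directly. If a crude bound leaves $K$ too large, we would first switch to sharper Dusart-type estimates for $\theta$.
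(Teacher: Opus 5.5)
Your proposal is correct. The paper does not prove this lemma; it quotes it from Robin~\cite{robin1983}, and your argument (reducing to primorials via the monotonicity of $\log x/\log\log x$ on $[e^e,\infty)$, then bounding $g(k)=k\log\theta(p_k)/\theta(p_k)$ using Robin's explicit estimate $\theta(p_k)\ge k(\log k+\log\log k-1.0072629)$, with the extremal case $k=9$, $g(9)\approx 1.38401$) is essentially Robin's own proof. Your estimate of $K$ is also much too pessimistic: that $\theta$ bound already makes your tail expression decreasing and below $1.3841$ from $k=14$ on, so only $3\le k\le 13$ need direct evaluation.
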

\begin{proof}[Proof of Theorem~\ref{General m theorem (m>=11)}]
Let $m  \geq 11$ be an integer. Applying Proposition~\ref{Sieving result} together with $g=x^n-1$ and $e=q^n-1$, we see that $(q,n)  \in N_m$ provided
 \begin{equation*}
     q^{ \frac{n}{2}}  \geq mW(x^n-1)W(q^n-1)^m.
 \end{equation*}
 By Lemma  \ref{L: 4.3 in Lemos}, with the choice $(a,b)=(1,0),$ we have $W(x^n-1)  \leq 2^n$. On the other hand, Lemma  \ref{Robin's bound} gives
 $$W(q^n-1) = 2^{ \omega(q^n-1)}  \leq 2^{ \frac{1.3841  \log(q^n)}{ \log \log (q^n-1)}} = (q^n)^{ \frac{1.3841  \log 2}{ \log \log(q^n-1)}}.$$
Since $q \geq \operatorname{char} \mathbb{F}_q \geq m$ and $n \geq 2,$ we have that $q^n \geq m^2.$ Moreover, the function $\frac{\log\log x}{\log\log (x-1)}$ is decreasing for $x> e+1.$ Hence, 
\begin{equation*}
\frac{\log\log q^n}{\log\log (q^n-1)} \leq \frac{\log\log m^2}{\log \log (m^2-1)}=: A(m).
\end{equation*}

Therefore, it is sufficient to verify the condition
   \begin{align*}
        q^{n \left( 1- \frac{1.919mA(m) }{ \log \log (q^n)} \right)} &  \geq m^2 4^n .
   \end{align*}
   
Let $X=q^n$. Taking logarithms, we obtain
\begin{align}
    \left(1- \frac{1.919mA(m)}{\log\log X} \right)\log X & \geq 2 \log m + n \log 4. \nonumber \\
    \intertext{Since $n=\frac{\log X}{\log q} \leq \frac{\log X}{\log m}$, it is sufficient to check}
    \left( 1 - \frac{1.919mA(m)}{\log\log X}-\frac{\log 4}{\log m}\right)\log X & \geq 2 \log m. \label{Criterion to check for general m}
\end{align}
For fixed $m,$ the above inequality holds for sufficiently large $X$, provided the expression in the parentheses is positive. Choose $c \in \left(0,1-\frac{\log 4}{\log m}\right)$ and suppose that
$$ 1 - \frac{1.919mA(m)}{\log\log X}-\frac{\log 4}{\log m} \geq c  \quad \text{implying} \quad \frac{1.919mA(m)}{\left( 1-c-\frac{\log 4 }{\log m} \right)}  \leq\log \log X. $$ 
Therefore,
\begin{equation}
  X  \geq \exp\left( \exp \left( \frac{1.919mA(m)}{\left( 1-c-\frac{\log 4 }{\log m} \right)}  \right)  \right)  \label{Condition 1}.  
\end{equation}

Assuming \eqref{Condition 1}, the condition in \eqref{Criterion to check for general m} is satisfied whenever $X \geq m^{\frac{2}{c}}$. Then
\begin{equation}\label{Winning conditions}
 (q,n) \in N_m \quad \text{whenever} \quad X  \geq \max \left\{ m^{\frac{2}{c}} ,  \exp\left( \exp \left( \frac{1.919mA(m)}{\left( 1-c-\frac{\log 4 }{\log m} \right)}  \right)  \right) \right\}
\end{equation} 
for some $c \in \left(0, 1-\frac{\log 4}{\log m}\right). $ For convenience, we choose $c=0.001.$ We note that this choice is not optimal. With this choice of $c$, the condition in \eqref{Winning conditions} becomes
\begin{equation}\label{Winning condition with c=0.001}
    X \geq \max \left\{m^{2000}, \exp(\exp(mB(m))) \right\} \quad \text{where} \quad B(m):=\frac{1.919A(m)}{\left(0.999-\frac{\log 4}{\log m}\right)}. \end{equation}
We now determine when the second term in \eqref{Winning condition with c=0.001}  exceeds $m^{2000}$. This occurs when
\begin{align}
    \log 2000 + \log\log m \leq Bm. \label{Inequality to check for general result}
\end{align}

Define $F(x)=Bx- \log\log x - \log 2000.$ Then, $F'(x)= B - \frac{1}{x \log x}.$ Since we have $A(m)>1,$ then 
\begin{equation*}
    B(m) = \frac{1.919A(m)}{\left(0.999- \frac{\log 4}{\log m}  \right)} > \frac{1.919}{0.999} > 1.92 > \frac{1}{11 \log 11} \geq \frac{1}{x \log x},
\end{equation*}
for any $x \geq 11$. Therefore, $F'(x)>0$ for all $x \geq 11$, so $F$ is increasing on $[11, \infty). $ It suffices to verify that $F(11) \geq 0$. But $F(11) \geq 1.92(11) - \log\log 11 -\log 2000 >0$, so \eqref{Inequality to check for general result} holds and we conclude
\begin{equation*}
    X= q^n \geq \exp(\exp(mB(m)))  \quad \text{implies} \quad (q,n) \in N_m.
\end{equation*}
\end{proof}

\medskip

We now compare our bound in Theorem~\ref{General m theorem (m>=11)} with the one given in Proposition \ref{Lemos bound for general m case}. Their result states that 
\begin{align*}
    q &\geq \exp(17^{m+2}) = \exp \left(\exp (\log 17(m+2)) \right).
\end{align*}
Comparing the exponents in the two bounds, our bound is stronger whenever
\begin{equation}
mB(m) - \log 2 < \log 17(m+2) \quad \text{implying} \quad (B(m)-\log 17)m < \log(578). 
\label{Improvement over Lemos bound criterion check}
\end{equation}
For each $m,$ we compute $B(m)$ and certify the criterion above. One readily shows that the inequality holds for every integer $m \geq 53$. 

\section{Future work}
We conclude by outlining some future directions. In the case $m=4$, one should expect $E_4\cap N_4$ to be large. A direct verification of such pairs should be possible, at least when $q^n$ is small. Since the primitive elements in $\F_{q^n}$ are easily computable for sufficiently small fields, the remaining difficulty would be locating normal elements. Also, when $n=2$, one should be able to apply sieving methods to determine that almost all pairs $(q,2)$ with $q \leq 3.82\times 10^{15}$ lie in $N_4$. This would require a clever enumeration of all such $q$ and an efficient factorisation algorithm for $q^n-1$ to compute the sieve bound. It is also expected that one can augment the proof of Proposition~\ref{Sieving result} to obtain a stronger modified sieve as in~\cite[Theorem 1]{BaggerPunch2025}. This would give a slight improvement for the upper bounds on $q^n$ given by the sieve.

In the direction of longer progressions, one can employ a similar approach to $m=4$. After the reduction of $\omega$ by means of a $(e,g)=(q^n-1,x^n-1)$ type general sieve, one could make use of a composite variant of the ramified bound for exponents $n$ to reduce the region of unknown. For the exponents where the search-space is sufficiently small, the behaviour in the region could be computed directly. Similarly, branching techniques may be sufficient to eliminate certain large values for $n$.

\section*{Acknowledgements}
The authors thank Prof.~Tim Trudgian for suggesting the problem, for his helpful feedback, and for numerous insightful discussions.

\appendix
\section{Appendix}
We record the set $E_4$ of 714 potential exception pairs referenced in Theorem \ref{T: main classification result}.

{\scriptsize
\begin{flushleft}
$E_4=\{$(5, 3), (7, 3), (11, 3), (13, 3), (17, 3), (19, 3), (23, 3), (25, 3), (29, 3), (31, 3), (37, 3), (41, 3), (43, 3), (47, 3), (49, 3), (53, 3), (59, 3), (61, 3), (67, 3), (71, 3), (73, 3), (79, 3), (83, 3), (89, 3), (97, 3), (101, 3), (103, 3), (107, 3), (109, 3), (113, 3), (121, 3), (125, 3), (127, 3), (131, 3), (137, 3), (139, 3), (149, 3), (151, 3), (157, 3), (163, 3), (169, 3), (181, 3), (191, 3), (193, 3), (197, 3), (199, 3), (211, 3), (223, 3), (229, 3), (239, 3), (241, 3), (251, 3), (263, 3), (271, 3), (277, 3), (281, 3), (283, 3), (289, 3), (307, 3), (311, 3), (313, 3), (331, 3), (337, 3), (343, 3), (347, 3), (349, 3), (361, 3), (367, 3), (373, 3), (379, 3), (397, 3), (401, 3), (409, 3), (419, 3), (421, 3), (431, 3), (433, 3), (439, 3), (443, 3), (457, 3), (461, 3), (463, 3), (487, 3), (491, 3), (499, 3), (521, 3), (523, 3), (529, 3), (541, 3), (547, 3), (571, 3), (577, 3), (599, 3), (601, 3), (607, 3), (613, 3), (617, 3), (619, 3), (625, 3), (631, 3), (641, 3), (643, 3), (653, 3), (659, 3), (661, 3), (673, 3), (683, 3), (691, 3), (701, 3), (709, 3), (733, 3), (739, 3), (751, 3), (757, 3), (809, 3), (811, 3), (821, 3), (823, 3), (841, 3), (859, 3), (877, 3), (881, 3), (907, 3), (911, 3), (919, 3), (947, 3), (961, 3), (967, 3), (971, 3), (991, 3), (997, 3), (1009, 3), (1021, 3), (1031, 3), (1033, 3), (1051, 3), (1061, 3), (1087, 3), (1093, 3), (1117, 3), (1123, 3), (1129, 3), (1171, 3), (1201, 3), (1231, 3), (1291, 3), (1303, 3), (1321, 3), (1327, 3), (1331, 3), (1369, 3), (1381, 3), (1429, 3), (1451, 3), (1453, 3), (1471, 3), (1531, 3), (1597, 3), (1621, 3), (1681, 3), (1741, 3), (1759, 3), (1801, 3), (1831, 3), (1849, 3), (1861, 3), (1871, 3), (1873, 3), (1901, 3), (1933, 3), (1951, 3), (2011, 3), (2053, 3), (2081, 3), (2083, 3), (2131, 3), (2143, 3), (2161, 3), (2179, 3), (2209, 3), (2221, 3), (2251, 3), (2281, 3), (2311, 3), (2341, 3), (2347, 3), (2371, 3), (2377, 3), (2381, 3), (2401, 3), (2473, 3), (2521, 3), (2531, 3), (2551, 3), (2557, 3), (2591, 3), (2671, 3), (2713, 3), (2731, 3), (2791, 3), (2809, 3), (2851, 3), (2857, 3), (2971, 3), (3001, 3), (3019, 3), (3061, 3), (3121, 3), (3181, 3), (3259, 3), (3271, 3), (3301, 3), (3313, 3), (3331, 3), (3361, 3), (3389, 3), (3469, 3), (3481, 3), (3511, 3), (3541, 3), (3571, 3), (3631, 3), (3691, 3), (3697, 3), (3721, 3), (3851, 3), (3931, 3), (4027, 3), (4057, 3), (4111, 3), (4159, 3), (4201, 3), (4421, 3), (4447, 3), (4489, 3), (4561, 3), (4591, 3), (4603, 3), (4621, 3), (4651, 3), (4657, 3), (4663, 3), (4831, 3), (4951, 3), (5021, 3), (5041, 3), (5101, 3), (5209, 3), (5281, 3), (5413, 3), (5441, 3), (5521, 3), (5581, 3), (5749, 3), (5791, 3), (5821, 3), (5881, 3), (6007, 3), (6073, 3), (6091, 3), (6211, 3), (6217, 3), (6241, 3), (6271, 3), (6301, 3), (6421, 3), (6451, 3), (6491, 3), (6581, 3), (6661, 3), (6763, 3), (6841, 3), (6871, 3), (6997, 3), (7151, 3), (7177, 3), (7393, 3), (7459, 3), (7561, 3), (7723, 3), (7879, 3), (7921, 3), (8101, 3), (8191, 3), (8521, 3), (8581, 3), (8647, 3), (8779, 3), (8941, 3), (8971, 3), (9109, 3), (9181, 3), (9241, 3), (9283, 3), (9661, 3), (9811, 3), (10201, 3), (10609, 3), (10627, 3), (10651, 3), (10711, 3), (11131, 3), (11551, 3), (11701, 3), (11881, 3), (11971, 3), (12301, 3), (12541, 3), (13003, 3), (13399, 3), (13729, 3), (14071, 3), (14431, 3), (14641, 3), (14821, 3), (14851, 3), (15031, 3), (15271, 3), (15401, 3), (15661, 3), (16381, 3), (16831, 3), (16921, 3), (17161, 3), (17551, 3), (17761, 3), (17851, 3), (18481, 3), (19321, 3), (19471, 3), (19891, 3), (20101, 3), (21121, 3), (21211, 3), (22051, 3), (22441, 3), (22621, 3), (23011, 3), (23431, 3), (24061, 3), (24091, 3), (24781, 3), (25411, 3), (25561, 3), (25741, 3), (28051, 3), (28711, 3), (32041, 3), (32761, 3), (33331, 3), (33751, 3), (33931, 3), (34651, 3), (34981, 3), (36481, 3), (36871, 3), (37501, 3), (37951, 3), (38011, 3), (38431, 3), (39601, 3), (40231, 3), (41161, 3), (42331, 3), (42571, 3), (43891, 3), (44521, 3), (44851, 3), (46861, 3), (50311, 3), (51151, 3), (51613, 3), (51871, 3), (54121, 3), (55441, 3), (56611, 3), (59341, 3), (60589, 3), (61051, 3), (64171, 3), (81181, 3), (88741, 3), (91081, 3), (111211, 3), (115831, 3), (132661, 3), (133981, 3), (139129, 3), (162691, 3), (175561, 3), (197341, 3), (326041, 3), (5, 4), (7, 4), (11, 4), (13, 4), (17, 4), (19, 4), (23, 4), (25, 4), (29, 4), (31, 4), (37, 4), (41, 4), (43, 4), (47, 4), (49, 4), (53, 4), (59, 4), (61, 4), (67, 4), (71, 4), (73, 4), (79, 4), (83, 4), (89, 4), (97, 4), (101, 4), (103, 4), (107, 4), (109, 4), (113, 4), (121, 4), (125, 4), (127, 4), (131, 4), (137, 4), (139, 4), (149, 4), (151, 4), (157, 4), (163, 4), (167, 4), (169, 4), (173, 4), (179, 4), (181, 4), (191, 4), (193, 4), (197, 4), (199, 4), (211, 4), (223, 4), (227, 4), (229, 4), (233, 4), (239, 4), (241, 4), (251, 4), (257, 4), (263, 4), (269, 4), (271, 4), (277, 4), (281, 4), (283, 4), (289, 4), (293, 4), (307, 4), (311, 4), (313, 4), (317, 4), (331, 4), (337, 4), (343, 4), (347, 4), (349, 4), (353, 4), (359, 4), (361, 4), (367, 4), (373, 4), (379, 4), (389, 4), (397, 4), (401, 4), (409, 4), (419, 4), (421, 4), (431, 4), (433, 4), (439, 4), (443, 4), (449, 4), (457, 4), (461, 4), (463, 4), (467, 4), (487, 4), (491, 4), (499, 4), (503, 4), (509, 4), (521, 4), (523, 4), (529, 4), (541, 4), (547, 4), (557, 4), (563, 4), (569, 4), (571, 4), (577, 4), (587, 4), (593, 4), (599, 4), (601, 4), (613, 4), (617, 4), (619, 4), (631, 4), (643, 4), (647, 4), (659, 4), (673, 4), (683, 4), (701, 4), (727, 4), (733, 4), (743, 4), (757, 4), (761, 4), (769, 4), (797, 4), (811, 4), (827, 4), (829, 4), (839, 4), (853, 4), (857, 4), (859, 4), (881, 4), (883, 4), (911, 4), (919, 4), (937, 4), (953, 4), (967, 4), (1009, 4), (1013, 4), (1021, 4), (1033, 4), (1063, 4), (1091, 4), (1093, 4), (1123, 4), (1217, 4), (1231, 4), (1259, 4), (1289, 4), (1301, 4), (1303, 4), (1321, 4), (1331, 4), (1373, 4), (1399, 4), (1409, 4), (1427, 4), (1429, 4), (1483, 4), (1567, 4), (1583, 4), (1597, 4), (1607, 4), (1637, 4), (1693, 4), (1721, 4), (1747, 4), (1847, 4), (1849, 4), (1861, 4), (1871, 4), (1877, 4), (1931, 4), (1973, 4), (2003, 4), (2029, 4), (2087, 4), (2113, 4), (2129, 4), (2141, 4), (2143, 4), (2267, 4), (2309, 4), (2311, 4), (2393, 4), (2423, 4), (2437, 4), (2521, 4), (2549, 4), (2617, 4), (2729, 4), (2801, 4), (2843, 4), (2927, 4), (2939, 4), (2969, 4), (3011, 4), (3037, 4), (3079, 4), (3191, 4), (3359, 4), (3541, 4), (3557, 4), (3583, 4), (3613, 4), (3697, 4), (3719, 4), (3739, 4), (4003, 4), (4159, 4), (4217, 4), (4523, 4), (5237, 4), (5741, 4), (5851, 4), (6271, 4), (6469, 4), (6733, 4), (7853, 4), (8581, 4), (9043, 4), (9283, 4), (9547, 4), (10009, 4), (11131, 4), (13859, 4), (15313, 4), (17863, 4), (20747, 4), (21013, 4), (23561, 4), (53087, 4), (5, 5), (7, 5), (11, 5), (13, 5), (19, 5), (25, 5), (31, 5), (37, 5), (43, 5), (49, 5), (61, 5), (71, 5), (211, 5), (421, 5), (631, 5), (5, 6), (7, 6), (11, 6), (13, 6), (17, 6), (19, 6), (23, 6), (25, 6), (29, 6), (31, 6), (37, 6), (41, 6), (43, 6), (47, 6), (49, 6), (53, 6), (59, 6), (61, 6), (67, 6), (71, 6), (79, 6), (89, 6), (101, 6), (103, 6), (109, 6), (121, 6), (131, 6), (139, 6), (169, 6), (179, 6), (181, 6), (191, 6), (199, 6), (211, 6), (229, 6), (373, 6), (419, 6), (439, 6), (571, 6), (5, 7), (7, 7), (11, 7), (13, 7), (5, 8), (7, 8), (11, 8), (13, 8), (17, 8), (19, 8), (23, 8), (29, 8), (41, 8), (43, 8), (5, 9), (7, 9), (11, 9), (5, 10), (7, 10), (11, 10), (13, 10), (5, 12), (7, 12), (11, 12), (13, 12), (23, 12), (5, 16)$\}$

\end{flushleft}
}

\end{document}